  \theoremstyle{remark}
  \newtheorem*{notation*}{\protect\notationname}
 \theoremstyle{definition}
 \newtheorem*{defn*}{\protect\definitionname}
  \theoremstyle{plain}
  \newtheorem{thm}{\protect\theoremname}[section]
  \theoremstyle{remark}
  \newtheorem{rem}{\protect\remarkname}[section]
  \theoremstyle{plain}
  \newtheorem{cor}{\protect\corollaryname}[section]
  \theoremstyle{plain}
  \newtheorem{lem}{\protect\lemmaname}[section]
  \theoremstyle{plain}
  \newtheorem*{assumption*}{\protect\assumptionname}
  \providecommand{\assumptionname}{Assumption}
  \providecommand{\definitionname}{Definition}
  \providecommand{\lemmaname}{Lemma}
  \providecommand{\notationname}{Notation}
  \providecommand{\remarkname}{Remark}
\providecommand{\corollaryname}{Corollary}
\providecommand{\theoremname}{Theorem}
\begin{document}
\global\long\def\vc#1{\mathbf{#1}}
\global\long\def\mx#1{\mathbf{#1}}
\global\long\def\st#1{\mathcal{#1}}
\global\long\def\norm#1{\left\Vert #1\right\Vert }
\global\long\def\herm{^{\mspace{-1mu}\mathsf{H}}}
\global\long\def\cmpl{^{^{\mathsf{c}}}}
\global\long\def\Arg{\text{Arg}}

\title{A Unifying Analysis of Projected Gradient Descent \linebreak{}
for $\ell_{p}$-constrained Least Squares}

\author[ECE]{S.~Bahmani\corref{cor1}}

\ead{sbahmani@cmu.edu}

\author[ECE,LTI]{B.~Raj}

\ead{bhiksha@cs.cmu.edu}

\address[ECE]{Department of Electrical \& Computer Engineering, Carnegie Mellon
University, 5000 Forbes Avenue, Pittsburgh, PA 15213}

\address[LTI]{Language Technologies Institute, Carnegie Mellon University, 5000
Forbes Avenue, Pittsburgh, PA 15213}

\cortext[cor1]{Corresponding author.}
\begin{abstract}
In this paper we study the performance of the Projected Gradient Descent
(PGD) algorithm for $\ell_{p}$-constrained least squares problems
that arise in the framework of Compressed Sensing. Relying on the
Restricted Isometry Property, we provide convergence guarantees for
this algorithm for the entire range of $0\leq p\leq1$, that include
and generalize the existing results for the Iterative Hard Thresholding
algorithm and provide a new accuracy guarantee for the Iterative Soft
Thresholding algorithm as special cases. Our results suggest that
in this group of algorithms, as $p$ increases from zero to one, conditions
required to guarantee accuracy become stricter and robustness to noise
deteriorates.\end{abstract}
\begin{keyword}
Least Squares, Compressed Sensing, Sparsity, Underdetermined Linear
Systems, Restricted Isometry Property, Projected Gradient Descent
\end{keyword}
\maketitle

\section{Introduction\label{sec:Introduction}}

Least squares problems occur in various signal processing and statistical
inference applications. In these problems the relation between the
vector of noisy observations $\vc y\in\mathbb{C}^{m}$ and the unknown
parameter or signal $\vc x^{\star}\in\mathbb{C}^{n}$ is governed
by a linear equation of the form 
\begin{align}
\vc y & =\mx A\vc x^{\star}+\vc e,\label{eq:Obsvn}
\end{align}
where $\mx A\in\mathbb{C}^{m\times n}$ is a matrix that may model
a linear system or simply contains a set of collected data. The vector
$\vc e\in\mathbb{C}^{m}$ represents the additive observation noise.
Estimating $\vc x^{\star}$ from the observation vector $\vc y$ is
achieved by finding the $\vc x\in\mathbb{C}^{n}$ that minimizes the
squared error $\norm{\mx A\vc x-\vc y}_{2}^{2}$. This least squares
approach, however, is well-posed only if the nullspace of matrix $\mx A$
merely contains the zero vector. The cases in which the nullspace
is greater than the singleton $\left\{ \vc 0\right\} ,$ as in underdetermined
scenarios ($m<n$), are more relevant in a variety of applications.
To enforce unique least squares solutions in these cases, it becomes
necessary to have some prior information about the structure of $\vc x^{\star}$.

One of the structural characteristics that describes parameters and
signals of interest in a wide range of applications from medical imaging
to astronomy is \emph{sparsity}. Since the advent of the theory of
\emph{compressed sensing}, development and analysis of algorithms
that exploit sparsity for estimation in underdetermined problems have
become important topics of study. In the absence of noise $\vc x^{\star}$
can be uniquely determined from the observation vector $\vc y=\mx A\vc x^{\star},$
provided that $\text{spark}\left(\mx A\right)>2\norm{\vc x^{\star}}_{0}$
(i.e., every $2\norm{\vc x^{\star}}_{0}$ columns of $\mx A$ are
linearly independent) \citep{DonohoElad2003optimally}. Then the ideal
estimation procedure could simply be finding the sparsest vector $\vc x$
that incurs no residual error (i.e., $\norm{\mx A\vc x-\vc y}_{2}=0$).
This ideal estimation method can be extended to the case of noisy
observations as well. Formally, given an upper bound $\epsilon$ on
the $\ell_{2}$-norm of the noise, the vector $\vc x^{\star}$ can
be estimated by solving the $\ell_{0}$-minimization 
\begin{align}
\arg\min_{\vc x} & \ \norm{\vc x}_{0}\quad\text{s.t. }\norm{\mx A\vc x-\vc y}_{2}\leq\epsilon,\label{eq:L0min}
\end{align}
where $\norm{\vc x}_{0}$ denotes the $\ell_{0}$-norm\emph{}%
\footnote{\label{fn:quasinorm}The term ``norm'' is used for convenience throughout
the paper. In fact, the $\ell_{0}$ functional violates the positive
scalability property of the norms and the $\ell_{p}$ functionals
with $p\in\left(0,1\right)$ are merely \emph{quasi-norms}.%
} of the vector $\vc x$ that merely counts the number of its non-zero
entries. However, this minimization problem is in general NP-hard
\citep{Natarajan1995sparse}. To avoid the combinatorial computational
cost of \eqref{eq:L0min}, often the $\ell_{0}$-norm is substituted
by the $\ell_{p}$-norm\textsuperscript{\ref{fn:quasinorm}} $\norm{\vc x}_{p}=\left(\sum_{i=1}^{n}\left|x_{i}\right|^{p}\right)^{1/p}$
for some $p\in\left(0,1\right]$ providing the $\ell_{p}$-minimization
\begin{align}
\arg\min_{\vc x} & \ \norm{\vc x}_{p}\quad\text{s.t. }\norm{\mx A\vc x-\vc y}_{2}\leq\epsilon.\label{eq:Lpmin}
\end{align}
 In particular, at $p=1$ the $\ell_{1}$-minimization can be solved
in polynomial time using convex programming algorithms. Several theoretical
and experimental results \citep[see e.g.,][]{Chartrand07,SCY08,Saab201030}
suggest that $\ell_{p}$-minimization with $p\in\left(0,1\right)$
requires fewer observations than the $\ell_{1}$-minimization to produce
accurate estimates. However, $\ell_{p}$-minimization is a non-convex
problem where finding the global minimizer is not guaranteed and can
be computationally more expensive than the $\ell_{1}$-minimization.

An alternative approach in the framework of sparse linear regression
is to solve the sparsity-constrained least squares problem 
\begin{align}
\arg\min_{\vc x} & \ \frac{1}{2}\norm{\mx A\vc x-\vc y}_{2}^{2}\quad\text{s.t. }\norm{\vc x}_{0}\leq s,\label{eq:L0cons}
\end{align}
 where $s=\norm{\vc x^{\star}}_{0}$ is given. Similar to \eqref{eq:L0min}
solving \eqref{eq:L0cons} is not tractable and approximate solvers
must be sought. Several compressed sensing algorithms jointly known
as the \emph{greedy pursuits} including Iterative Hard Thresholding
(IHT) \citep{blumensath_iterative_2009}, Subspace Pursuit (SP) \citep{wei_dai_subspace_2009},
and Compressive Sampling Matching Pursuit (CoSaMP) \citep{NeedellTropp09}
are implicitly approximate solvers of \eqref{eq:L0cons}.

As a relaxation of \eqref{eq:L0cons} one may also consider the $\ell_{p}$-constrained
least squares 
\begin{align}
\arg\min_{\vc x} & \ \frac{1}{2}\norm{\mx A\vc x-\vc y}_{2}^{2}\quad\text{s.t. }\norm{\vc x}_{p}\leq R^{\star},\label{eq:Lpcons}
\end{align}
given $R^{\star}=\norm{\vc x^{\star}}_{p}$. The Least Absolute Shrinkage
and Selection Operator (LASSO) \citep{Tibshirani_96} is a well-known
special case of this optimization problem with $p=1$. The optimization
problem of \eqref{eq:Lpcons} typically does not have a closed-form
solution, but can be (approximately) solved using iterative Projected
Gradient Descent (PGD), which has been outlined in Section \ref{sec:LpPGD}.
Previous studies of these algorithms, henceforth referred to as $\ell_{p}$-PGD,
are limited to the cases of $p=0$ and $p=1$. The algorithm corresponding
to the case of $p=0$ is recognized in the literature as the IHT algorithm.
The Iterative Soft Thresholding (IST) algorithm \citep{beck2009FISTA}
is originally proposed as a solver of the Basis Pursuit Denoising
(BPDN) \citep{ChenDonohoSaunders98}, which is the unconstrained equivalent
of the LASSO with the $\ell_{1}$-norm as the regularization term.
However, the IST algorithm also naturally describes a PGD solver of
\eqref{eq:Lpcons} for $p=1$ \citep[see for e.g,][]{AgarwalPGD-FullLength}
by considering varying shrinkage in iterations, as described in \citep{beck2009FISTA},
to enforce the iterates to have sufficiently small $\ell_{1}$-norm.
The main contribution of this paper is a comprehensive analysis of
the performance of $\ell_{p}$-PGD algorithms for the entire regime
of $p\in\left[0,1\right]$.

In the extreme case of $p=0$ we have the $\ell_{0}$-PGD algorithm
which is indeed the IHT algorithm. Unlike conventional PGD algorithms,
the feasible set ---the set of points that satisfy the optimization
constraints---for IHT is the non-convex set of $s$-sparse vectors.
Therefore, the standard analysis for PGD algorithms with convex feasible
sets that relies on the fact that projection onto convex sets defines
a contraction map will no longer apply. However, imposing extra conditions
on the matrix $\mx A$ can be leveraged to provide convergence guarantees
\citep{blumensath_iterative_2009,RIP-Foucart2010}.

At $p=1$ where \eqref{eq:Lpcons} is a convex program, the corresponding
$\ell_{1}$-PGD algorithm has been studied under the name of IST in
different scenarios (see \citep{beck2009FISTA} and references therein).
Ignoring the sparsity of the vector $\vc x^{\star}$, it can be shown
that the IST algorithm exhibits a sublinear rate of convergence as
a convex optimization algorithm \citep{beck2009FISTA}. In the context
of the \emph{sparse} estimation problems, however, faster rates of
convergence can be guaranteed for IST. For example, in \citep{AgarwalPGD-FullLength}
PGD algorithms are studied in a broad category of regression problems
regularized with ``decomposable'' norms. In this configuration,
which includes sparse linear regression via IST, the PGD algorithms
are shown to possess a linear rate of convergence provided the objective
function---the squared error in our case---satisfies \emph{Restricted
Strong Convexity} (RSC) and \emph{Restricted Smoothness} (RSM) conditions
\citep{AgarwalPGD-FullLength}. These two conditions basically control
the curvature of the objective function being restricted to (nearly)
sparse vectors. Although the results provided in \citep{AgarwalPGD-FullLength}
consolidate the analysis of several interesting problems, they do
not readily extend to the case of $\ell_{p}$-constrained least squares
since the constraint is not defined by a true norm.

In this paper, by considering $\ell_{p}$-balls of given radii as
feasible sets in the general case, we study the $\ell_{p}$-PGD algorithms
that render a continuum of sparse reconstruction algorithms, and encompass
both the IHT and the IST algorithms. In Section \ref{sec:LpPGD} using
the Restricted Isometry Property (RIP) \citep{CandesRombergTao06}
we provide accuracy guarantees for $\ell_{p}$-PGD algorithms which
assert that these algorithms converge to the true signal up to a multiple
of the noise level at a linear rate. Furthermore, our results suggest
that as $p$ increases from zero to one the convergence and robustness
to noise deteriorates. This conclusion is particularly in agreement
with the empirical studies of the \emph{phase} \emph{transition} of
the IST and IHT algorithms provided in \citep{MalekiTST}. Our results
for $\ell_{0}$-PGD coincides with the guarantees for IHT derived
in \citep{RIP-Foucart2010}. Furthermore, to the best of our knowledge
the RIP-based accuracy guarantees we provide for IST, which is the
$\ell_{1}$-PGD algorithm, have not been derived before. The last
section of the paper, Section \ref{sec:discuss}, is dedicated to
discussion of some details and future work.
\begin{notation*}
Throughout the paper we assume that the vectors and matrices have
complex entries unless stated otherwise. The set $\left\{ 1,2,\ldots,n\right\} $
is denoted by $\left[n\right]$ for brevity. We use $\mx M_{\st I}$
to denote restriction of the matrix $\mx M$ to the columns selected
by the set of indices $\st I\subseteq\left[n\right]$. Similarly,
$\vc v|_{\st I}$ denotes restriction of the vector $\vc v$ to the
entries with indices in $\st I$. Depending on the context, the vector
$\vc v|_{\st I}$ may also denote a vector that is equal to the vector
$\vc v$ except for the part supported on $\st I\cmpl$ where it is
zero. The set of non-zero entries (i.e, the support set) and the best
$s$-term approximation of vector $\vc v$ are denoted by $\text{supp}\left(\vc v\right)$
and $\vc v_{s}$, respectively. Furthermore, the matrix $\mx M\herm$
denotes the Hermitian conjugate of the matrix $\mx M$. The inner
product of vectors $\vc u$ and $\vc v$ is denoted by $\left\langle \vc u,\vc v\right\rangle $.
Finally, $\Re\left[\cdot\right]$ and $\Arg\left(\cdot\right)$ denote
the real part and the phase of their arguments, respectively.
\end{notation*}

\section{\label{sec:LpPGD}Projected Gradient Descent for $\ell_{p}$-constrained
Least Squares }

\begin{algorithm}[b]
\DontPrintSemicolon
\SetKwInOut{Input}{input}\SetKwInOut{Output}{output}
\Input{Objective function $f\left(\cdot\right)$ and an operator $\mathrm{P}_\st{Q}\left(\cdot\right)$ that performs projection onto the set $\st{Q}$}
\BlankLine
Choose the initial point $\vc{x}^0\in\st{Q}$\;
$k\longleftarrow 0$\;
\Repeat{halting condition holds}{
Choose a step-size $\eta_k > 0$\;
$\vc{x}^{k+1}\longleftarrow\mathrm{P}_\st{Q}\left(\vc{x}^k-\eta_k\nabla f\left(\vc{x}^k\right)\right)$\;
$k\longleftarrow k+1$\;
}
\Output{the (approximate) minimizer $\vc{x}^{k}$}
\caption{Project Gradient Descent} \label{alg:PGD}
\end{algorithm}

One of the most elementary tools in convex optimization for constrained
minimization is the PGD method. For a differentiable convex objective
function $f\left(\cdot\right)$, a convex set $\st Q$, and a projection
operator $\mathrm{P}_{\st Q}\left(\cdot\right)$ defined by 
\begin{align}
\mathrm{P}_{\st Q}\left(\vc x\right) & =\arg\min_{\vc u}\ \norm{\vc x-\vc u}_{2}^{2}\quad\text{s.t. }\vc u\in\st Q,\label{eq:ProjOp}
\end{align}
 the PGD algorithm solves the minimization 
\begin{align*}
\arg\min_{\vc x} & \ f\left(\vc x\right)\quad\text{s.t. }\vc x\in\st Q
\end{align*}
 via the iterations outlined in Algorithm \ref{alg:PGD}. For example,
in a broad range of applications where the objective function is the
squared error of the form $f\left(\vc x\right)=\frac{1}{2}\norm{\mx A\vc x-\vc y}_{2}^{2}$,
the iterate update equation of the PGD method in Algorithm \ref{alg:PGD}
reduces to 
\begin{align}
\vc x^{k+1} & =\mathrm{P}_{\st Q}\left(\vc x^{k}-\eta_{k}\mx A\herm\left(\mx A\vc x^{k}-\vc y\right)\right).\label{eq:PGD}
\end{align}

In the context of compressed sensing if \eqref{eq:Obsvn} holds and
$\st Q$ is the $\ell_{1}$-ball of radius $\norm{\vc x^{\star}}_{1}$
centered at the origin, Algorithm \ref{alg:PGD} reduces to the IST
algorithm (except perhaps for variable step-size) that solves \eqref{eq:Lpcons}
for $p=1$. By relaxing the convexity restriction imposed on $\st Q$
the PGD iterations also describe the IHT algorithm where $\st Q$
is the set of vectors whose $\ell_{0}$-norm is not greater than $s=\norm{\vc x^{\star}}_{0}$. 

Henceforth, we refer to an $\ell_{p}$-ball centered at the origin
and aligned with the axes simply as an $\ell_{p}$-ball for brevity.
To proceed let us define the set 
\begin{align}
\st F_{p}\left(c\right) & =\left\{ \vc x\in\mathbb{C}^{n}\mid\sum_{i=1}^{n}\left|x_{i}\right|^{p}\leq c\right\} ,\label{eq:FeasibleSet}
\end{align}
 for $c\in\mathbb{R}^{+}$, which describes an $\ell_{p}$-ball. Although
$c$ can be considered as the radius of this $\ell_{p}$-ball with
respect to the metric $d\left(\vc a,\vc b\right)=\norm{\vc a-\vc b}_{p}^{p}$,
we call $c$ the ``$p$-radius'' of the $\ell_{p}$-ball to avoid
confusion with the conventional definition of the radius for an $\ell_{p}$-ball,
i.e., $\max_{\vc x\in\st F_{p}\left(c\right)}\ \norm{\vc x}_{p}$.
Furthermore, at $p=0$ where $\st F_{p}\left(c\right)$ describes
the same ``$\ell_{0}$-ball'' different values of $c$, we choose
the smallest $c$ as the $p$-radius of the $\ell_{p}$-ball for uniqueness.
In this section we will show that to estimate the signal $\vc x^{\star}$
that is either sparse or \emph{compressible} in fact the PGD method
can be applied in a more general framework where the feasible set
is considered to be an $\ell_{p}$-ball of given $p$-radius. Ideally
the $p$-radius of the feasible set should be $\norm{\vc x^{\star}}_{p}^{p}$,
but in practice this information might not be available. In our analysis,
we merely assume that the $p$-radius of the feasible set is not greater
than $\norm{\vc x^{\star}}_{p}^{p}$, i.e., the feasible set does
not contain $\vc x^{\star}$ in its interior.

Note that for the feasible sets $\st Q=\st F_{p}\left(c\right)$ with
$p\in\left(0,1\right]$ the minimum value in \eqref{eq:ProjOp} is
always attained because the objective is continuous and the set $\st Q$
is compact. Therefore, there is at least one minimizer in $\st Q$.
However, for $p<1$ the set $\st Q$ is nonconvex and there might
be multiple projection points in general. For the purpose of the analysis
presented in this paper, however, any such minimizer is acceptable.
Using the axiom of choice, we can assume existence of a choice function
that for every $\vc x$ selects one of the solutions of \eqref{eq:ProjOp}.
This function indeed determines a projection operator which we denote
by $\mathrm{P}_{\st Q}\left(\vc x\right)$. 

Many compressed sensing algorithms such as those of \citep{blumensath_iterative_2009,wei_dai_subspace_2009,NeedellTropp09,Candes2008589}
rely on sufficient conditions expressed in terms of the RIP of the
matrix $\mx A$. We also provide accuracy guarantees of the $\ell_{p}$-PGD
algorithm with the assumption that certain RIP conditions hold. The
following definition states the RIP in its asymmetric form. This definition
is previously \textcolor{black}{proposed in the literature \citep{FoucartLq},}
though in a slightly different format.
\begin{defn*}[RIP]
 Matrix $\mx A$ is said to have RIP of order $s$ with restricted
isometry constants $\alpha_{s}$ and $\beta_{s}$ if they are in order
the smallest and the largest non-negative numbers such that 
\begin{align*}
\beta_{s}\norm{\vc x}_{2}^{2} & \leq\norm{\mx A\vc x}_{2}^{2}\leq\alpha_{s}\norm{\vc x}_{2}^{2}
\end{align*}
holds for all $s$-sparse vectors $\vc x$.
\end{defn*}
In the literature usually the symmetric form of the RIP is considered
in which $\alpha_{s}=1+\delta_{s}$ and $\beta_{s}=1-\delta_{s}$
with $\delta_{s}\in\left[0,1\right]$. For example, in \citep{RIP-Foucart2010}
the $\ell_{1}$-minimization is shown to accurately estimate $\vc x^{\star}$
provided $\delta_{2s}<3/\left(4+\sqrt{6}\right)\approx0.46515$. Similarly,
accuracy of the estimates obtained by IHT, SP, and CoSaMP are guaranteed
provided $\delta_{3s}<\nicefrac{1}{2}$ \citep{RIP-Foucart2010},
$\delta_{3s}<0.205$ \citep{wei_dai_subspace_2009}, and $\delta_{4s}<\sqrt{2/\left(5+\sqrt{73}\right)}\approx0.38427$
\citep{RIP-Foucart2010}, respectively.

As our first contribution, in the following theorem we show that the
$\ell_{p}$-PGD accurately solves $\ell_{p}$-constrained least squares
provided the matrix $\mx A$ satisfies a proper RIP criterion.\textcolor{black}{{}
To proceed we define 
\begin{align*}
\rho_{s} & =\frac{\alpha_{s}-\beta_{s}}{\alpha_{s}+\beta_{s}},
\end{align*}
 which can be interpreted as the equivalent of the standard RIP constant
$\delta_{s}$ in the asymmetric form of RIP.}\textcolor{red}{{} }
\begin{thm}
\label{thm:LpPGD} Let $\vc x^{\star}$ be an $s$-sparse vector whose
compressive measurements are observed according to \eqref{eq:Obsvn}
using a measurement matrix $\mx A$ that satisfies RIP of order $3s$.
To estimate $\vc x^{\star}$ via the $\ell_{p}$-PGD algorithm an
$\ell_{p}$-ball $\widehat{\st B}$ with $p$-radius $\widehat{c}$
(i.e., $\widehat{\st B}=\st F_{p}\left(\widehat{c}\right)$) is given
as the feasible set for the algorithm such that $\widehat{c}=\left(1-\varepsilon\right)^{p}\norm{\vc x^{\star}}_{p}^{p}$
for some%
\footnote{At $p=0$ we have $\left(1-\varepsilon\right)^{0}=1$ which enforces
$\widehat{c}=\norm{\vc x^{\star}}_{0}$. In this case $\varepsilon$
is not unique, but to make a coherent statement we assume that $\varepsilon=0$.%
} $\varepsilon\in\left[0,1\right)$. Furthermore, suppose that the
step-size $\eta_{k}$ of the algorithm can be chosen to obey $\left|\frac{\eta_{k}\left(\alpha_{3s}+\beta_{3s}\right)}{2}-1\right|\leq\tau$
for some $\tau\geq0$. If 
\begin{align}
\left(1+\tau\right)\rho_{3s}+\tau & <\frac{1}{2\left(1+\sqrt{2}\xi\left(p\right)\right)^{2}}\label{eq:mainCondition}
\end{align}

with $\xi\left(p\right)$ denoting the function $\sqrt{p}\left(\frac{2}{2-p}\right)^{1/2-1/p}$,
then $\vc x^{k}$, the $k$-th iterate of the algorithm, obeys 
\begin{align}
\norm{\vc x^{k}-\vc x^{\star}}_{2} & \leq\left(2\gamma\right)^{k}\norm{\vc x^{\star}}_{2}+\frac{2\left(1+\tau\right)}{1-2\gamma}\left(1+\xi\left(p\right)\right)\left(\varepsilon\left(1+\rho_{3s}\right)\norm{\vc x^{\star}}_{2}+\frac{2\sqrt{\alpha_{2s}}}{\alpha_{3s}+\beta_{3s}}\norm{\vc e}_{2}\right)+\varepsilon\norm{\vc x^{\star}}_{2},\label{eq:AccuracyGuarantee}
\end{align}
 where 
\begin{align}
\gamma & =\left(\left(1+\tau\right)\rho_{3s}+\tau\right)\left(1+\sqrt{2}\xi\left(p\right)\right)^{2}.\label{eq:gamma}
\end{align}
\end{thm}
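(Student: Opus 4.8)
The plan is to establish a one-step recursion for the distance between the iterates and a \emph{feasible} surrogate of $\vc x^{\star}$, and then unroll it into a geometric series. Since $\widehat c\le\norm{\vc x^{\star}}_{p}^{p}$, the signal $\vc x^{\star}$ need not lie in $\widehat{\st B}$, so first I would introduce the scaled vector $\bar{\vc x}=\left(1-\varepsilon\right)\vc x^{\star}$, which has the same support as $\vc x^{\star}$, is $s$-sparse, and satisfies $\norm{\bar{\vc x}}_{p}^{p}=\widehat c$, hence $\bar{\vc x}\in\widehat{\st B}$. The triangle inequality $\norm{\vc x^{k}-\vc x^{\star}}_{2}\le\norm{\vc x^{k}-\bar{\vc x}}_{2}+\varepsilon\norm{\vc x^{\star}}_{2}$ already isolates the trailing additive term $\varepsilon\norm{\vc x^{\star}}_{2}$ in \eqref{eq:AccuracyGuarantee}, reducing the task to controlling $\norm{\vc x^{k}-\bar{\vc x}}_{2}$.

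Writing $\vc b^{k}=\vc x^{k}-\eta_{k}\mx A\herm\left(\mx A\vc x^{k}-\vc y\right)$, the iterate $\vc x^{k+1}=\mathrm{P}_{\widehat{\st B}}\left(\vc b^{k}\right)$ minimizes $\norm{\cdot-\vc b^{k}}_{2}$ over $\widehat{\st B}$, so comparing against $\bar{\vc x}\in\widehat{\st B}$ and setting $\vc d=\vc x^{k+1}-\bar{\vc x}$ yields $\norm{\vc d}_{2}^{2}\le 2\Re\left\langle \vc d,\vc b^{k}-\bar{\vc x}\right\rangle$. Substituting the model $\vc y=\mx A\vc x^{\star}+\vc e$ together with $\vc x^{k}-\vc x^{\star}=\left(\vc x^{k}-\bar{\vc x}\right)-\varepsilon\vc x^{\star}$, I would rewrite $\vc b^{k}-\bar{\vc x}=\left(\mx I-\eta_{k}\mx A\herm\mx A\right)\left(\vc x^{k}-\bar{\vc x}\right)-\varepsilon\left(\mx I-\eta_{k}\mx A\herm\mx A\right)\vc x^{\star}+\eta_{k}\mx A\herm\vc e$, splitting the right-hand side into a curvature term, an $\varepsilon$-perturbation term, and a noise term.

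The main obstacle is that for $p>0$ neither $\vc d$ nor $\vc x^{k}-\bar{\vc x}$ is sparse, so RIP cannot be applied directly. To handle this I would sort the entries of each such vector outside $\text{supp}\left(\bar{\vc x}\right)$ and partition them into consecutive blocks of size $s$: the head (the reference support together with the first block, of total size at most $3s$) is amenable to the order-$3s$ RIP, while the $\ell_{2}$ mass of the remaining tail is bounded using that $\vc x^{k+1},\vc x^{k},\bar{\vc x}\in\st F_{p}\left(\widehat c\right)$. This compressibility estimate for elements of an $\ell_{p}$-ball is exactly where $\xi\left(p\right)=\sqrt{p}\left(\frac{2}{2-p}\right)^{1/2-1/p}$ enters: absorbing the tail of each factor of the bilinear curvature term into its head contributes a multiplier $\left(1+\sqrt{2}\xi\left(p\right)\right)$, producing the $\left(1+\sqrt{2}\xi\left(p\right)\right)^{2}$ that appears in $\gamma$ via \eqref{eq:gamma}, while the $\varepsilon$ and noise terms each pick up a single $\left(1+\xi\left(p\right)\right)$. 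Proving this tail bound with the sharp constant $\xi\left(p\right)$---and checking that $\xi\left(0\right)=0$, so that the tail vanishes and the pure IHT estimate is recovered---is the technical heart of the argument and the step I expect to be hardest.

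With compressibility in hand, I would bound the three terms using the RIP (order $3s$ for the curvature and $\varepsilon$ terms, $2s$ for the noise coupling): the step-size hypothesis $\left|\eta_{k}\left(\alpha_{3s}+\beta_{3s}\right)/2-1\right|\le\tau$ forces the symmetric operator $\mx I-\eta_{k}\mx A\herm\mx A$ to have norm at most $\left(1+\tau\right)\rho_{3s}+\tau$ on $3s$-sparse vectors (using $2\alpha_{3s}/\left(\alpha_{3s}+\beta_{3s}\right)=1+\rho_{3s}$), which combined with the $\left(1+\sqrt{2}\xi\left(p\right)\right)^{2}$ factor gives the contraction coefficient $2\gamma$; the $\varepsilon$-term then contributes the $\varepsilon\left(1+\rho_{3s}\right)\norm{\vc x^{\star}}_{2}$ piece and the noise term the $\frac{2\sqrt{\alpha_{2s}}}{\alpha_{3s}+\beta_{3s}}\norm{\vc e}_{2}$ piece, each carried by the prefactor $2\left(1+\tau\right)\left(1+\xi\left(p\right)\right)$. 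Dividing through by $\norm{\vc d}_{2}$ gives the one-step recursion $\norm{\vc x^{k+1}-\bar{\vc x}}_{2}\le 2\gamma\norm{\vc x^{k}-\bar{\vc x}}_{2}+D$. Since hypothesis \eqref{eq:mainCondition} is precisely the statement $\gamma<1/2$, i.e. $2\gamma<1$, I would unroll this as a geometric series, bound the initial distance $\norm{\vc x^{0}-\bar{\vc x}}_{2}$ by $\norm{\vc x^{\star}}_{2}$, and finally restore the $\varepsilon\norm{\vc x^{\star}}_{2}$ term from the first paragraph to obtain \eqref{eq:AccuracyGuarantee}.
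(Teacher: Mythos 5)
Your proposal is correct and follows the paper's argument in all essentials: a variational inequality from the projection step, a block partition of the (non-sparse) error combined with the shifting inequality to exploit $\ell_{p}$-ball compressibility (the source of $\xi\left(p\right)$ and of the factors $\left(1+\sqrt{2}\xi\left(p\right)\right)^{2}$ and $1+\xi\left(p\right)$), RIP bilinear estimates of order $3s$ and $2s$ on the blocks, and an unrolled geometric recursion under $2\gamma<1$. The one structural difference is your reference point: you compare against the explicitly scaled vector $\bar{\vc x}=\left(1-\varepsilon\right)\vc x^{\star}$, whereas the paper uses a projection $\vc x_{\perp}^{\star}$ of $\vc x^{\star}$ onto $\widehat{\st B}$ and must then separately prove that this projection is supported on $\mathrm{supp}\left(\vc x^{\star}\right)$ (its Lemma 2.3) and bound $\norm{\vc x^{\star}-\vc x_{\perp}^{\star}}_{2}\leq\varepsilon\norm{\vc x^{\star}}_{2}$ precisely by comparing with your $\bar{\vc x}$. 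Your choice makes both facts immediate and loses nothing, since all the argument requires of the reference point is feasibility, support inside $\mathrm{supp}\left(\vc x^{\star}\right)$, $p$-radius equal to $\widehat{c}$ (so that feasibility of $\vc x^{k}$ yields the $\ell_{p}$-to-$\ell_{2}$ bound on the off-support part of the error), and distance at most $\varepsilon\norm{\vc x^{\star}}_{2}$ from $\vc x^{\star}$; it is a mild but genuine simplification. One algebraic slip to fix before writing this up: since $\vc x^{k}-\vc x^{\star}=\left(\vc x^{k}-\bar{\vc x}\right)-\varepsilon\vc x^{\star}$, the perturbation term in $\vc b^{k}-\bar{\vc x}$ is $+\varepsilon\eta_{k}\mx A\herm\mx A\vc x^{\star}$, not $-\varepsilon\left(\mx I-\eta_{k}\mx A\herm\mx A\right)\vc x^{\star}$ (your expression carries a spurious $-\varepsilon\vc x^{\star}$). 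The correct term, after the same head--tail decomposition of $\vc d$ and the bound $\eta_{k}\leq2\left(1+\tau\right)/\left(\alpha_{3s}+\beta_{3s}\right)$ together with $2\alpha_{3s}/\left(\alpha_{3s}+\beta_{3s}\right)=1+\rho_{3s}$, is exactly what produces the $\varepsilon\left(1+\rho_{3s}\right)\norm{\vc x^{\star}}_{2}$ piece of \eqref{eq:AccuracyGuarantee}, so the slip does not derail the argument.
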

\begin{rem}
Note that the parameter $\varepsilon$ indicates how well the feasible
set $\widehat{\st B}$ approximates the ideal feasible set $\st B^{\star}=\st F_{p}\left(\norm{\vc x^{\star}}_{p}^{p}\right)$.
The terms in \eqref{eq:AccuracyGuarantee} that depend on $\varepsilon$
determine the error caused by the mismatch between $\widehat{\st B}$
and $\st B^{\star}$. Ideally, one has $\varepsilon=0$ and the residual
error becomes merely dependent on the noise level $\norm{\vc e}_{2}$. 
\end{rem}

\begin{rem}
The parameter $\tau$ determines the deviation of the step-size $\eta_{k}$
from $\frac{2}{\alpha_{3s}+\beta_{3s}}$ which might not be known
\emph{a priori}. In this formulation, smaller values of $\tau$ are
desirable since they impose less restrictive condition on $\rho_{3s}$
and also result in smaller residual error. Furthermore, we can naively
choose $\eta_{k}=\norm{\mx A\vc x}_{2}^{2}/\norm{\vc x}_{2}^{2}$
for some $3s$-sparse vector $\vc x\neq\vc 0$ to ensure $1/\alpha_{3s}\leq\eta_{k}\leq1/\beta_{3s}$
and thus $\left|\eta_{k}\frac{\alpha_{3s}+\beta_{3s}}{2}-1\right|\leq\frac{\alpha_{3s}-\beta_{3s}}{2\beta_{3s}}$.
Therefore, we can always assume that $\tau\leq\frac{\alpha_{3s}-\beta_{3s}}{2\beta_{3s}}$.
\end{rem}

\begin{rem}
Note that the function $\xi\left(p\right)$, depicted in Fig. \ref{fig:xi},
controls the variation of the stringency of the condition \eqref{eq:mainCondition}
and the variation of the residual error in \eqref{eq:AccuracyGuarantee}
in terms of $p$. Straightforward algebra shows that $\xi\left(p\right)$
is an increasing function of $p$ with $\xi\left(0\right)=0$. Therefore,
as $p$ increases from zero to one, the RHS of \eqref{eq:mainCondition}
decreases, which implies the measurement matrix must have a smaller
$\rho_{3s}$ to satisfy the sufficient condition \eqref{eq:mainCondition}.
Similarly, as $p$ increases from zero to one the residual error in
\eqref{eq:AccuracyGuarantee} increases. To contrast this result with
the existing guarantees of other iterative algorithms, suppose that
$\tau=0$, $\varepsilon=0$, and we use the symmetric form of RIP
(i.e., $\alpha_{3s}=1+\delta_{3s}$ and $\beta_{3s}=1-\delta_{3s}$)
which implies $\rho_{3s}=\delta_{3s}$. At $p=0$, corresponding to
the IHT algorithm, \eqref{eq:mainCondition} reduces to $\delta_{3s}<\nicefrac{1}{2}$
that is identical to the condition derived in \citep{RIP-Foucart2010}.
Furthermore, the required condition at $p=1$, corresponding to the
IST algorithm, would be $\delta_{3s}<\nicefrac{1}{8}$.
\end{rem}
\begin{figure}
\noindent \centering\includegraphics[width=0.75\textwidth]{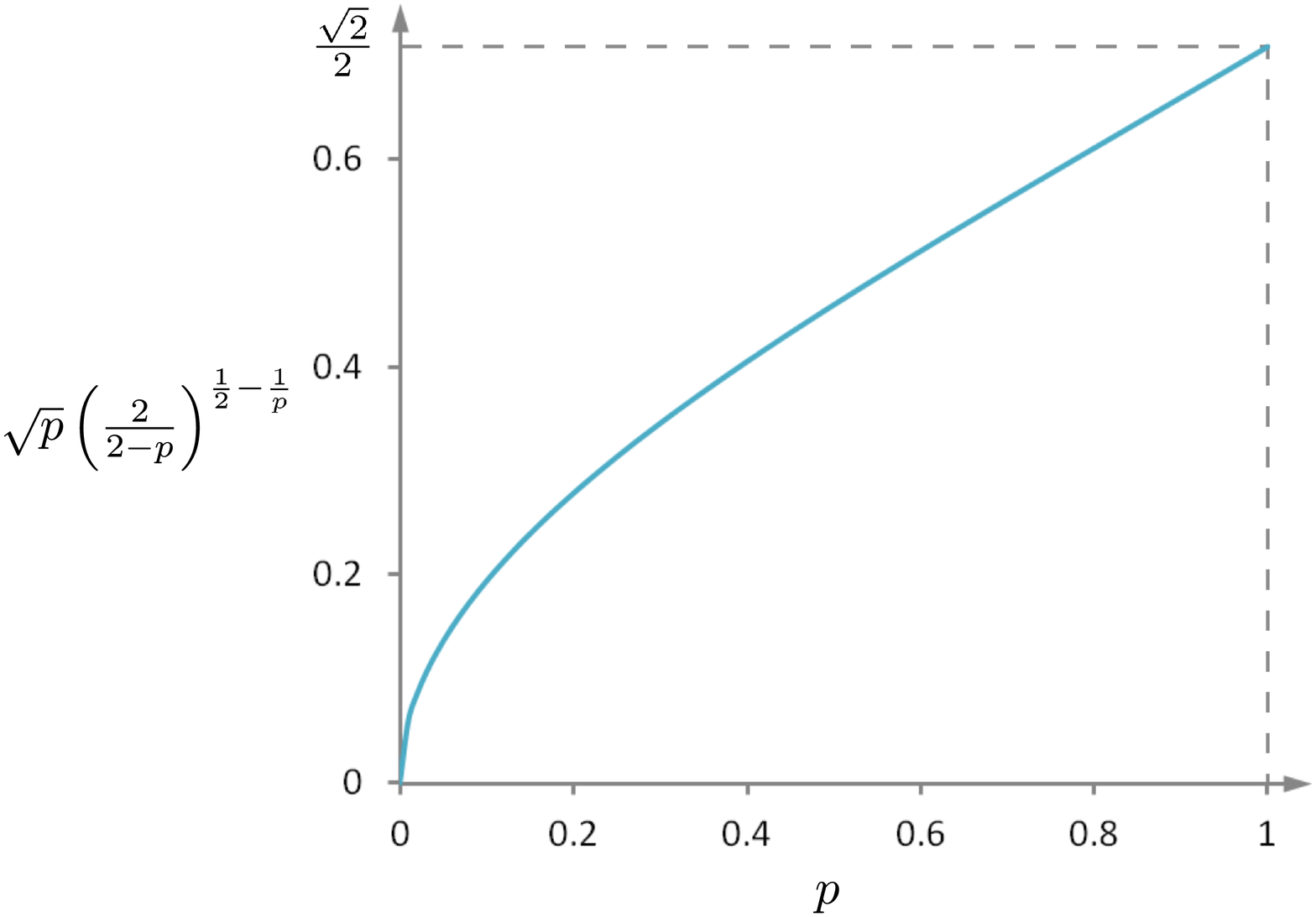}

\caption{\label{fig:xi}Plot of the function $\xi\left(p\right)=\sqrt{p}\left(\frac{2}{2-p}\right)^{\frac{1}{2}-\frac{1}{p}}$
which determines the contraction factor and the residual error.}
\end{figure}

The guarantees stated in Theorem \ref{thm:LpPGD} can be generalized
for nearly sparse or \emph{compressible} signals that can be defined
using power laws as described in \citep{CandesTao06}. The following
corollary provides error bounds for a general choice of $\vc x^{\star}$. 
\begin{cor}
Suppose that $\vc x^{\star}$ is an arbitrary vector in $\mathbb{C}^{n}$
and the conditions of Theorem \ref{thm:LpPGD} hold for $\vc x_{s}^{\star}$,
then the $k$-th iterate of the $\ell_{p}$-PGD algorithm provides
an estimate of $\vc x_{s}^{\star}$ that obeys

\begin{align*}
\norm{\vc x^{k}-\vc x^{\star}}_{2} & \leq\left(2\gamma\right)^{k}\norm{\vc x_{s}^{\star}}_{2}+\frac{2\left(1+\tau\right)\left(1+\xi\left(p\right)\right)}{1-2\gamma}\left(\varepsilon\left(1+\rho_{3s}\right)\norm{\vc x_{s}^{\star}}_{2}+\frac{2\alpha_{2s}}{\alpha_{3s}\!+\!\beta_{3s}}\left(\norm{\vc x^{\star}\!-\!\vc x_{s}^{\star}}_{2}+\frac{\norm{\vc x^{\star}\!-\!\vc x_{s}^{\star}}_{1}}{\sqrt{2s}}\right)\right.\\
 & \left.+\frac{2\sqrt{\alpha_{2s}}}{\alpha_{3s}\!+\!\beta_{3s}}\norm{\vc e}_{2}\right)+\varepsilon\norm{\vc x_{s}^{\star}}_{2}+\norm{\vc x^{\star}-\vc x_{s}^{\star}}_{2}.
\end{align*}
\end{cor}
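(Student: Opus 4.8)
The plan is to reduce the compressible-signal case to the exactly-$s$-sparse case already settled by Theorem~\ref{thm:LpPGD}, by folding the tail $\vc{x}^\star-\vc{x}_s^\star$ into an effective noise vector. Rewriting the observation model \eqref{eq:Obsvn} as
\[
\vc{y}=\mx{A}\vc{x}_s^\star+\tilde{\vc{e}},\qquad \tilde{\vc{e}}:=\mx{A}\left(\vc{x}^\star-\vc{x}_s^\star\right)+\vc{e},
\]
shows that the iterates generated by $\ell_p$-PGD on the data $\vc{y}$ are identical to those it would generate if the ground truth were the $s$-sparse vector $\vc{x}_s^\star$ and the noise were $\tilde{\vc{e}}$; the iterates depend only on $\vc{y}$, $\mx{A}$, the step-sizes, and the fixed feasible set, none of which change under this reinterpretation. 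Since by hypothesis the conditions of Theorem~\ref{thm:LpPGD} hold for $\vc{x}_s^\star$, I would apply that theorem verbatim with $\vc{x}^\star$ replaced by $\vc{x}_s^\star$ and $\vc{e}$ replaced by $\tilde{\vc{e}}$. This yields a bound on $\norm{\vc{x}^k-\vc{x}_s^\star}_2$ of exactly the shape \eqref{eq:AccuracyGuarantee}, but with $\norm{\vc{e}}_2$ replaced by $\norm{\tilde{\vc{e}}}_2$ and $\norm{\vc{x}^\star}_2$ replaced by $\norm{\vc{x}_s^\star}_2$.

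The next step is to control the effective noise. By the triangle inequality $\norm{\tilde{\vc{e}}}_2\leq\norm{\mx{A}\left(\vc{x}^\star-\vc{x}_s^\star\right)}_2+\norm{\vc{e}}_2$, so it remains to bound $\norm{\mx{A}\left(\vc{x}^\star-\vc{x}_s^\star\right)}_2$ even though the tail $\vc{x}^\star-\vc{x}_s^\star$ is not sparse. Here I would invoke the standard RIP tail estimate: partition $\text{supp}\left(\vc{x}^\star-\vc{x}_s^\star\right)$ into consecutive blocks of size $2s$ ordered by decreasing magnitude, apply the upper RIP bound $\norm{\mx{A}\vc{v}}_2\leq\sqrt{\alpha_{2s}}\norm{\vc{v}}_2$ to each block (note that $\alpha_{2s}$ is well defined and bounded by $\alpha_{3s}$, since RIP of order $3s$ is assumed), and sum the resulting $\ell_2$-norms using the across-block comparison $\norm{\vc{v}_{T_{j}}}_2\leq\norm{\vc{v}_{T_{j-1}}}_1/\sqrt{2s}$, which holds because every entry of block $T_j$ is dominated by the smallest entry of the preceding block $T_{j-1}$. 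This gives
\[
\norm{\mx{A}\left(\vc{x}^\star-\vc{x}_s^\star\right)}_2\leq\sqrt{\alpha_{2s}}\left(\norm{\vc{x}^\star-\vc{x}_s^\star}_2+\frac{\norm{\vc{x}^\star-\vc{x}_s^\star}_1}{\sqrt{2s}}\right).
\]
Substituting this into the coefficient $\frac{2\sqrt{\alpha_{2s}}}{\alpha_{3s}+\beta_{3s}}\norm{\tilde{\vc{e}}}_2$ inherited from the transplanted theorem bound reproduces precisely the two terms $\frac{2\alpha_{2s}}{\alpha_{3s}+\beta_{3s}}\left(\norm{\vc{x}^\star-\vc{x}_s^\star}_2+\norm{\vc{x}^\star-\vc{x}_s^\star}_1/\sqrt{2s}\right)$ and $\frac{2\sqrt{\alpha_{2s}}}{\alpha_{3s}+\beta_{3s}}\norm{\vc{e}}_2$ appearing inside the parenthesis of the corollary, the first arising from the product $\sqrt{\alpha_{2s}}\cdot\sqrt{\alpha_{2s}}=\alpha_{2s}$.

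Finally, the corollary measures the error relative to $\vc{x}^\star$ rather than to $\vc{x}_s^\star$, so I would close with one more triangle inequality $\norm{\vc{x}^k-\vc{x}^\star}_2\leq\norm{\vc{x}^k-\vc{x}_s^\star}_2+\norm{\vc{x}^\star-\vc{x}_s^\star}_2$, which accounts for the trailing $+\norm{\vc{x}^\star-\vc{x}_s^\star}_2$ term; the geometric-decay term and the $\varepsilon$-mismatch terms carry over unchanged from Theorem~\ref{thm:LpPGD} with $\norm{\vc{x}_s^\star}_2$ in place of $\norm{\vc{x}^\star}_2$. I expect the main obstacle to be the tail estimate of the second step: one has to set up the block decomposition so that the magnitude ordering legitimizes the across-block $\ell_2$-to-$\ell_1$ comparison, and confirm that the order-$3s$ RIP used in the parent theorem indeed furnishes the order-$2s$ isometry constant employed here. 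Everything else amounts to bookkeeping of the constants already isolated in \eqref{eq:AccuracyGuarantee} and \eqref{eq:gamma}.
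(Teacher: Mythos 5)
Your proposal follows the paper's proof essentially verbatim: the same reduction $\vc y=\mx A\vc x_{s}^{\star}+\widetilde{\vc e}$ with $\widetilde{\vc e}=\mx A\left(\vc x^{\star}-\vc x_{s}^{\star}\right)+\vc e$, the same application of Theorem \ref{thm:LpPGD} to the pair $\left(\vc x_{s}^{\star},\widetilde{\vc e}\right)$, the same tail bound $\norm{\mx A\left(\vc x^{\star}-\vc x_{s}^{\star}\right)}_{2}\leq\sqrt{\alpha_{2s}}\left(\norm{\vc x^{\star}-\vc x_{s}^{\star}}_{2}+\norm{\vc x^{\star}-\vc x_{s}^{\star}}_{1}/\sqrt{2s}\right)$ (which the paper simply cites as Proposition 3.5 of the CoSaMP paper rather than re-deriving via the block decomposition you sketch), and the same closing triangle inequality. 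The argument is correct; the only cosmetic difference is that you prove the tail estimate from first principles where the paper cites it.
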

\begin{proof}
Let $\widetilde{\vc e}=\mx A\left(\vc x^{\star}-\vc x_{s}^{\star}\right)+\vc e$.
We can write $\vc y=\mx A\vc x^{\star}+\vc e=\mx{\mx A}\vc x_{s}^{\star}+\widetilde{\vc e}$.
Thus, we can apply Theorem \ref{thm:LpPGD} considering $\vc x_{s}^{\star}$
as the signal of interest and $\widetilde{\vc e}$ as the noise vector
and obtain

\begin{align}
\norm{\vc x^{k}-\vc x_{s}^{\star}}_{2} & \leq\left(2\gamma\right)^{k}\norm{\vc x_{s}^{\star}}_{2}+\frac{2\left(1+\tau\right)}{1-2\gamma}\left(1+\xi\left(p\right)\right)\left(\varepsilon\left(1+\rho_{3s}\right)\norm{\vc x_{s}^{\star}}_{2}+\frac{2\sqrt{\alpha_{2s}}}{\alpha_{3s}+\beta_{3s}}\norm{\widetilde{\vc e}}_{2}\right)+\varepsilon\norm{\vc x_{s}^{\star}}_{2}.\label{eq:LpPGDE1}
\end{align}
 Furthermore, we have 
\begin{align*}
\norm{\widetilde{\vc e}}_{2} & =\norm{\mx A\left(\vc x^{\star}-\vc x_{s}^{\star}\right)+\vc e}_{2}\\
 & \leq\norm{\mx A\left(\vc x^{\star}-\vc x_{s}^{\star}\right)}_{2}+\norm{\vc e}_{2}.
\end{align*}
Then applying Proposition 3.5 of \citep{NeedellTropp09} yields 
\begin{align*}
\norm{\widetilde{\vc e}}_{2} & \leq\sqrt{\alpha_{2s}}\left(\norm{\vc x^{\star}-\vc x_{s}^{\star}}_{2}+\frac{1}{\sqrt{2s}}\norm{\vc x^{\star}-\vc x_{s}^{\star}}_{1}\right)+\norm{\vc e}_{2}.
\end{align*}
Applying this inequality in \eqref{eq:LpPGDE1} followed by the triangle
inequality $\norm{\vc x^{k}-\vc x^{\star}}_{2}\leq\norm{\vc x^{k}-\vc x_{s}^{\star}}_{2}+\norm{\vc x^{\star}-\vc x_{s}^{\star}}_{2}$
yields the desired inequality. 
\end{proof}
To prove Theorem \ref{thm:LpPGD} first a series of lemmas should
be established. In what follows, $\vc x_{\perp}^{\star}$is a projection
of the $s$-sparse vector $\vc x^{\star}$ onto $\widehat{\st B}$
and $\vc x^{\star}-\vc x_{\perp}^{\star}$ is denoted by $\vc d^{\star}$.
Furthermore, for $k=0,1,2,\ldots$ we denote $\vc x^{k}-\vc x_{\perp}^{\star}$
by $\vc d^{k}$ for compactness.
\begin{lem}
\label{lem:OptimCondition} If $\vc x^{k}$ denotes the estimate in
the $k$-th iteration of $\ell_{p}$-PGD, then 
\begin{align*}
\norm{\vc d^{k+1}}_{2}^{2} & \leq2\Re\left[\left\langle \vc d^{k},\vc d^{k+1}\right\rangle -\eta_{k}\left\langle \mx A\vc d^{k},\mx A\vc d^{k+1}\right\rangle \right]+2\eta_{k}\Re\left\langle \mx A\vc d^{k+1},\mx A\vc d^{\star}+\vc e\right\rangle .
\end{align*}
\end{lem}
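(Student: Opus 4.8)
The plan is to exploit the defining optimality of the projection operator, which survives even though the feasible set $\widehat{\st B}$ is nonconvex for $p<1$. Abbreviate $\vc z=\vc x^{k}-\eta_{k}\mx A\herm\left(\mx A\vc x^{k}-\vc y\right)$, so that by construction $\vc x^{k+1}=\mathrm{P}_{\widehat{\st B}}\left(\vc z\right)$ is a global minimizer of $\norm{\vc z-\vc u}_{2}^{2}$ over $\vc u\in\widehat{\st B}$. Since $\vc x_{\perp}^{\star}\in\widehat{\st B}$ is feasible (being a projection of $\vc x^{\star}$ onto $\widehat{\st B}$), the minimizing property immediately yields $\norm{\vc z-\vc x^{k+1}}_{2}^{2}\leq\norm{\vc z-\vc x_{\perp}^{\star}}_{2}^{2}$. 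This single inequality is the only place the projection enters the argument, and it requires no convexity: it uses only that $\vc x^{k+1}$ attains the global minimum distance while $\vc x_{\perp}^{\star}$ is merely one competitor.

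Next I would convert this distance inequality into the claimed quadratic bound. Writing $\vc z-\vc x^{k+1}=\left(\vc z-\vc x_{\perp}^{\star}\right)-\vc d^{k+1}$ and expanding $\norm{\left(\vc z-\vc x_{\perp}^{\star}\right)-\vc d^{k+1}}_{2}^{2}\leq\norm{\vc z-\vc x_{\perp}^{\star}}_{2}^{2}$ via $\norm{\vc a-\vc b}_{2}^{2}=\norm{\vc a}_{2}^{2}-2\Re\left\langle \vc a,\vc b\right\rangle +\norm{\vc b}_{2}^{2}$, the common term $\norm{\vc z-\vc x_{\perp}^{\star}}_{2}^{2}$ cancels and leaves $\norm{\vc d^{k+1}}_{2}^{2}\leq2\Re\left\langle \vc z-\vc x_{\perp}^{\star},\vc d^{k+1}\right\rangle$.

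Then I would insert the observation model $\vc y=\mx A\vc x^{\star}+\vc e$. Since $\vc x^{k}-\vc x^{\star}=\vc d^{k}-\vc d^{\star}$, the residual is $\mx A\vc x^{k}-\vc y=\mx A\left(\vc d^{k}-\vc d^{\star}\right)-\vc e$, so $\vc z-\vc x_{\perp}^{\star}=\vc d^{k}-\eta_{k}\mx A\herm\left(\mx A\vc d^{k}-\mx A\vc d^{\star}-\vc e\right)$. Substituting this and pushing $\mx A\herm$ across the inner product through the adjoint identity $\left\langle \mx A\herm\vc u,\vc v\right\rangle =\left\langle \vc u,\mx A\vc v\right\rangle$ gives $\norm{\vc d^{k+1}}_{2}^{2}\leq2\Re\left[\left\langle \vc d^{k},\vc d^{k+1}\right\rangle -\eta_{k}\left\langle \mx A\vc d^{k},\mx A\vc d^{k+1}\right\rangle \right]+2\eta_{k}\Re\left\langle \mx A\vc d^{\star}+\vc e,\mx A\vc d^{k+1}\right\rangle$. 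Applying the symmetry $\Re\left\langle \vc u,\vc v\right\rangle =\Re\left\langle \vc v,\vc u\right\rangle$ to the last term reproduces the stated inequality verbatim.

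The arithmetic is entirely routine; the one conceptual point I would flag is the very first step, namely that the projection inequality persists despite the nonconvexity of $\widehat{\st B}$. Rather than the usual contraction or variational characterization of projections onto convex sets---which would fail for $p<1$---I rely only on the weaker fact that the projection achieves the minimum distance to its target, so any fixed feasible comparator, here $\vc x_{\perp}^{\star}$, may be used. Everything downstream is exact algebra with the adjoint and with $\Re$, so I anticipate no further obstacle.
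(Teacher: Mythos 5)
Your proposal is correct and follows essentially the same route as the paper: both hinge on the single inequality $\norm{\vc z-\vc x^{k+1}}_{2}^{2}\leq\norm{\vc z-\vc x_{\perp}^{\star}}_{2}^{2}$ obtained from the minimizing property of the (possibly nonconvex) projection with $\vc x_{\perp}^{\star}$ as a feasible competitor, followed by substitution of the observation model and routine expansion. The only difference is cosmetic ordering of the algebra (you expand the squares before substituting $\vc y=\mx A\vc x^{\star}+\vc e$, the paper does the reverse), so no further comment is needed.
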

\begin{proof}
Note that $\vc x^{k+1}$ is a projection of $\vc x^{k}-\eta_{k}\mx A\herm\left(\mx A\vc x^{k}-\vc y\right)$
onto $\widehat{\st B}$. Since $\vc x_{\perp}^{\star}$ is also a
feasible point (i.e., $\vc x_{\perp}^{\star}\in\widehat{\st B}$)
we have 
\begin{align*}
\norm{\vc x^{k+1}-\vc x^{k}+\eta_{k}\mx A\herm\left(\mx A\vc x^{k}-\vc y\right)}_{2}^{2} & \leq\norm{\vc x_{\perp}^{\star}-\vc x^{k}+\eta_{k}\mx A\herm\left(\mx A\vc x^{k}-\vc y\right)}_{2}^{2}.
\end{align*}
Using \eqref{eq:Obsvn} we obtain 
\begin{align*}
\norm{\vc d^{k+1}-\vc d^{k}+\eta_{k}\mx A\herm\left(\mx A\left(\vc d^{k}-\vc d^{\star}\right)-\vc e\right)}_{2}^{2} & \leq\norm{-\vc d^{k}+\eta_{k}\mx A\herm\left(\mx A\left(\vc d^{k}-\vc d^{\star}\right)-\vc e\right)}_{2}^{2}.
\end{align*}
 Therefore, we obtain 
\begin{align*}
\Re\left\langle \vc d^{k+1},\vc d^{k+1}-2\vc d^{k}+2\eta_{k}\mx A\herm\left(\mx A\vc d^{k}-\left(\mx A\vc d^{\star}+\vc e\right)\right)\right\rangle  & \leq0
\end{align*}
 that yields the the desired result after straightforward algebraic
manipulations.
\end{proof}
The following lemma is a special case of the generalized shifting
inequality proposed in \citep[Theorem 2]{RIP-Foucart2010}. Please
refer to the reference for the proof.
\begin{lem}[\emph{Shifting Inequality} \citep{RIP-Foucart2010}]
\label{lem:ShiftIneq}If $0<p<2$ and 
\begin{align*}
u_{1}\geq u_{2}\geq\cdots\geq u_{l}\ge u_{l+1}\geq\cdots\geq u_{r}\geq u_{r+1}\geq\cdots\geq u_{r+l}\geq0,
\end{align*}
 then for $C=\max\left\{ r^{\frac{1}{2}-\frac{1}{p}},\sqrt{\frac{p}{2}}\left(\frac{2}{2-p}l\right)^{\frac{1}{2}-\frac{1}{p}}\right\} $,
\begin{align}
\left(\sum_{i=l+1}^{l+r}u_{i}^{2}\right)^{\frac{1}{2}} & \leq C\left(\sum_{i=1}^{r}u_{i}^{p}\right)^{\frac{1}{p}}.\label{eq:SIE0}
\end{align}

\end{lem}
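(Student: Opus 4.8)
The plan is to reduce \eqref{eq:SIE0} to a finite-dimensional convex maximization over a polytope, exploiting homogeneity together with a monomial change of variables. First I would note that both sides are positively homogeneous of degree one in $\left(u_1,\ldots,u_{l+r}\right)$, so after rescaling I may assume the normalization $\sum_{i=1}^{r}u_i^p=1$, whereupon it suffices to prove $\sum_{i=l+1}^{l+r}u_i^2\le C^2$.

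The decisive step is the substitution $w_i=u_i^p$. Since $t\mapsto t^{1/p}$ is increasing, the ordering $u_1\ge\cdots\ge u_{l+r}\ge 0$ is equivalent to $w_1\ge\cdots\ge w_{l+r}\ge 0$; the normalization becomes the \emph{linear} constraint $\sum_{i=1}^{r}w_i=1$; and the quantity to be bounded becomes $\Phi(w)=\sum_{i=l+1}^{l+r}w_i^{2/p}$. Because $0<p<2$ we have $2/p>1$, so $w\mapsto w^{2/p}$ is convex on $[0,\infty)$ and $\Phi$ is convex. The feasible set $P=\{\,w\in\mathbb{R}^{l+r}:w_1\ge\cdots\ge w_{l+r}\ge 0,\ \sum_{i=1}^{r}w_i=1\,\}$ is a compact convex polytope, and a convex function on a polytope attains its maximum at a vertex, so I only need to evaluate $\Phi$ on the vertices of $P$.

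I would then characterize those vertices as the prefix-constant vectors: for each $j\in\{1,\ldots,l+r\}$, the vector with $w_1=\cdots=w_j=1/\min(j,r)$ and $w_i=0$ for $i>j$. (This follows by counting which order and nonnegativity constraints can be simultaneously tight alongside the single equality.) Evaluating $\Phi$ on vertex $j$ gives $\Phi_j=\max(j-l,0)\,\min(j,r)^{-2/p}$. For $j>r$ the value is largest at $j=l+r$, yielding $r^{1-2/p}=C_a^2$ with $C_a=r^{1/2-1/p}$. For $j\le r$ the value is $\max(j-l,0)\,j^{-2/p}$; relaxing $j$ to a real variable, an elementary derivative computation locates the maximizer at $j^\star=\tfrac{2}{2-p}l$, where $\Phi$ equals $\tfrac{pl}{2-p}\bigl(\tfrac{2l}{2-p}\bigr)^{-2/p}=C_b^2$ with $C_b=\sqrt{p/2}\,\bigl(\tfrac{2}{2-p}l\bigr)^{1/2-1/p}$ (and if $j^\star>r$ the maximum over $j\le r$ is instead dominated by $C_a^2$). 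Thus $\max_j\Phi_j\le\max\{C_a^2,C_b^2\}=C^2$, and undoing the normalization and taking square roots delivers \eqref{eq:SIE0}.

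The main obstacle I anticipate lies in the two structural facts rather than in any calculation: rigorously identifying the extreme points of the monotone polytope $P$ as exactly the prefix-constant vectors, and justifying that relaxing the integer index $j$ to a real variable only enlarges the maximum, so that the closed form $j^\star=\tfrac{2}{2-p}l$ furnishes a legitimate upper bound even when it is non-integral. Everything else—the homogeneity reduction, the convexity of $w\mapsto w^{2/p}$ for $p<2$, the reduction of maximizing a convex function to its vertices, and the optimization of $(j-l)\,j^{-2/p}$—is routine, and it is precisely the substitution $w_i=u_i^p$ that makes the $\ell_p$ content linear and the target convex, which is what renders the two constants $C_a$ and $C_b$ transparent as the values on two families of vertices.
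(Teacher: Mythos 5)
Your proposal is correct, but note that the paper itself does not prove this lemma: it is imported verbatim from the cited reference (Foucart's generalized shifting inequality, \citep[Theorem 2]{RIP-Foucart2010}) with the remark ``refer to the reference for the proof.'' So the comparison is really with the literature proof, which proceeds by direct estimation: each $u_i$ in the shifted block is bounded by a power mean of terms preceding it, and the resulting sums are optimized by hand to produce the two regimes encoded in $C$. Your route is genuinely different and, to my mind, cleaner: the substitution $w_i=u_i^p$ linearizes the $\ell_p$ constraint, turns the target into the convex function $\sum_{i=l+1}^{l+r}w_i^{2/p}$ (convex precisely because $0<p<2$), and reduces everything to evaluating that function at the vertices of the monotone polytope, which are exactly the prefix-constant vectors. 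All the steps you flag as potential obstacles go through: the vertex characterization follows from the active-constraint count you indicate (one slack inequality among the $l+r$ order/nonnegativity constraints plus the single equality), and relaxing the integer index $j$ to a real variable in $(j-l)j^{-2/p}$ can only increase the maximum, so the stationary point $j^{\star}=\frac{2l}{2-p}$ legitimately yields $C_b^2$, with the $j>r$ vertices and the case $j^{\star}>r$ both dominated by $C_a^2=r^{1-2/p}$. The constants match the statement exactly. What your approach buys is a conceptual explanation of why $C$ is a maximum of two terms---they are the values of the objective on two families of extreme points---at the cost of invoking the (standard) facts that a convex function on a compact polytope is maximized at a vertex and that the vertices are as described; the reference's proof is more elementary but the optimization producing the constant is less transparent.
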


\begin{lem}
\label{lem:SupportSub}For $\vc x_{\perp}^{\star}$, a projection
of $\vc x^{\star}$ onto $\widehat{\st B}$, we have $\mathrm{supp}\left(\vc x_{\perp}^{\star}\right)\subseteq\st S=\mathrm{supp}\left(\vc x^{\star}\right)$.\end{lem}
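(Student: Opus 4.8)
The plan is to argue by a support-restriction (truncation) argument that exploits the optimality of the projection together with the fact that $\vc x^{\star}$ vanishes off $\st S$. Since $\vc x_{\perp}^{\star}$ minimizes $\norm{\vc x^{\star}-\vc u}_{2}^{2}$ over $\vc u\in\widehat{\st B}$, I would first split this squared distance across $\st S$ and its complement. Because $x_{i}^{\star}=0$ for every $i\in\st S\cmpl$, one has
\begin{align*}
\norm{\vc x^{\star}-\vc u}_{2}^{2} & =\sum_{i\in\st S}\left|x_{i}^{\star}-u_{i}\right|^{2}+\sum_{i\in\st S\cmpl}\left|u_{i}\right|^{2},
\end{align*}
so any mass that a candidate $\vc u$ places on $\st S\cmpl$ contributes positively to the distance and is, intuitively, wasted.

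Next I would produce a competing feasible point by zeroing out the off-support entries: define $\vc u'$ by $\vc u'|_{\st S}=\vc u|_{\st S}$ and $\vc u'|_{\st S\cmpl}=\vc 0$. Truncation cannot violate the constraint, since $\sum_{i}\left|u_{i}'\right|^{p}=\sum_{i\in\st S}\left|u_{i}\right|^{p}\leq\sum_{i}\left|u_{i}\right|^{p}\leq\widehat{c}$; hence $\vc u'\in\widehat{\st B}$. At the same time, the displayed decomposition gives $\norm{\vc x^{\star}-\vc u'}_{2}^{2}=\sum_{i\in\st S}\left|x_{i}^{\star}-u_{i}\right|^{2}\leq\norm{\vc x^{\star}-\vc u}_{2}^{2}$, with equality precisely when $u_{i}=0$ for all $i\in\st S\cmpl$.

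Applying this with $\vc u=\vc x_{\perp}^{\star}$ closes the argument: if $\vc x_{\perp}^{\star}$ had a nonzero entry on $\st S\cmpl$, the truncated vector $\vc u'$ would be feasible with strictly smaller objective, contradicting the fact that $\vc x_{\perp}^{\star}$ is a minimizer of the projection problem \eqref{eq:ProjOp}. Therefore $\mathrm{supp}\left(\vc x_{\perp}^{\star}\right)\subseteq\st S$, and the conclusion holds for every minimizer, independently of which point the choice function selects.

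I do not expect a genuine obstacle here; the only points needing care are (i) confirming that the $\ell_{p}$-ball constraint is monotone under truncation, which follows because every term $\left|u_{i}\right|^{p}$ is nonnegative and so discarding terms can only decrease the sum---this is what makes the nonconvexity of $\widehat{\st B}$ for $p<1$ irrelevant to the argument---and (ii) handling the $p=0$ case uniformly, where the same monotonicity holds for the counting functional under the stated convention. Tracking the equality condition in the distance comparison is what upgrades the inequality into strict improvement, and hence into the desired support containment, so I would be careful to retain strictness rather than settle for a mere non-increase.
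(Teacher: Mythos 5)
Your proof is correct and follows essentially the same route as the paper: both argue by contradiction, zeroing out off-support entries of $\vc x_{\perp}^{\star}$ to produce a feasible competitor (feasible because discarding nonnegative terms $\left|u_{i}\right|^{p}$ can only decrease the constraint sum) with strictly smaller distance to $\vc x^{\star}$. The only cosmetic difference is that the paper zeroes a single offending coordinate while you truncate all of $\st S\cmpl$ at once; the mechanism is identical.
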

\begin{proof}
Proof is by contradiction. Suppose that there exists a coordinate
$i$ such that $x_{i}^{\star}=0$ but $x_{\perp i}^{\star}\neq0$.
Then one can construct vector $\vc x'$ which is equal to $\vc x_{\perp}^{\star}$
except at the $i$-th coordinate where it is zero. Obviously $\vc x'$
is feasible because $\norm{\vc x'}_{p}^{p}<\norm{\vc x_{\perp}^{\star}}_{p}^{p}\leq\widehat{c}$.
Furthermore, 
\begin{align*}
\norm{\vc x^{\star}-\vc x'}_{2}^{2} & =\sum_{j=1}^{n}\left|x_{j}^{\star}-x'_{j}\right|^{2}\\
 & =\sum_{\substack{j=1\\
j\neq i
}
}^{n}\left|x_{j}^{\star}-x_{_{\perp}j}^{\star}\right|^{2}\\
 & <\sum_{j=1}^{n}\left|x_{j}^{\star}-x_{_{\perp}j}^{\star}\right|^{2}\\
 & =\norm{\vc x^{\star}-\vc x_{\perp}^{\star}}_{2}^{2}.
\end{align*}
 This is a contradiction since by definition 
\begin{align*}
\vc x_{\perp}^{\star} & \in\arg\min_{\vc x}\ \frac{1}{2}\norm{\vc x^{\star}-\vc x}_{2}^{2}\quad\text{s.t. }\norm{\vc x}_{p}^{p}\leq\widehat{c}.
\end{align*}

\end{proof}
To continue, we introduce the following sets which partition the coordinates
of vector $\vc d^{k}$ for $k=0,1,2,\ldots$. As defined previously
in Lemma \ref{lem:SupportSub}, let $\st S=\text{supp}\left(\vc x^{\star}\right)$.
Lemma \ref{lem:SupportSub} shows that $\mbox{supp}\left(\vc x_{\perp}^{\star}\right)\subseteq\st S$,
thus we can assume that $\vc x_{\perp}^{\star}$ is $s$-sparse. Let
$\st S_{k,1}$ be the support of the $s$ largest entries of $\vc d^{k}|_{\st S\cmpl}$
in magnitude, and define $\st T_{k}=\st S\cup\st S_{k,1}$. Furthermore,
let $\st S_{k,2}$ be the support of the $s$ largest entries of $\vc d^{k}|_{\st T_{k}\cmpl}$,
$\st S_{k,3}$ be the support of the next $s$ largest entries of
$\vc d^{k}|_{\st T_{k}\cmpl}$, and so on. We also set $\st T_{k,j}=\st S{}_{k,j}\cup\st S{}_{k,j+1}$
for $j\geq1$. This partitioning of the vector $\vc d^{k}$ is illustrated
in Fig. \ref{fig:ErrorPartition}.

\begin{figure}[t]
\noindent \centering\includegraphics[width=0.7\textwidth]{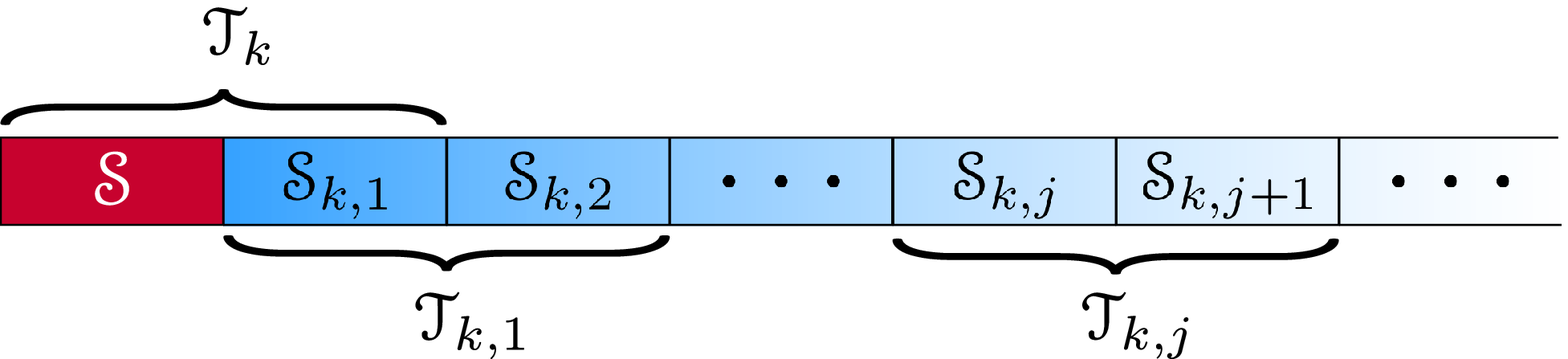}\caption{\label{fig:ErrorPartition}Partitioning of vector $\vc d^{k}=\vc x^{k}-\vc x_{\perp}^{\star}$.
The color gradient represents decrease of the magnitudes of the corresponding
coordinates.}
\end{figure}

\begin{lem}
\label{lem:ell2ellp} For $k=0,1,2,\ldots$ the vector $\vc d^{k}$
obeys
\begin{align*}
\sum_{i\geq2}\norm{\vc d^{k}|_{\st S_{k,i}}}_{2} & \leq\sqrt{2p}\left(\frac{2s}{2-p}\right)^{\frac{1}{2}-\frac{1}{p}}\norm{\vc d^{k}|_{\st S\cmpl}}_{p}.
\end{align*}
\end{lem}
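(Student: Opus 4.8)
The plan is to exploit the block structure of $\vc d^{k}|_{\st S\cmpl}$ and to charge the $\ell_{2}$ mass of each block to the $\ell_{p}$ mass of the coordinates lying just above it, using the Shifting Inequality (Lemma \ref{lem:ShiftIneq}) as the sole analytic input. First I would list the magnitudes of the entries of $\vc d^{k}|_{\st S\cmpl}$ in non-increasing order, so that $\st S_{k,1}$ occupies the top $s$ positions, $\st S_{k,2}$ the next $s$, and so on, and the blocks have non-increasing magnitude. For each fixed $i\geq2$ I would apply Lemma \ref{lem:ShiftIneq} to the length-$2s$ window formed by $\st S_{k,i-1}$ followed by $\st S_{k,i}$, with $r=l=s$; this identifies the shifted $\ell_{2}$ sum over the second half of the window (the block $\st S_{k,i}$) with the leading $\ell_{p}$ sum over the first half (the block $\st S_{k,i-1}$) and gives
\begin{align*}
\norm{\vc d^{k}|_{\st S_{k,i}}}_{2} & \leq C\,\norm{\vc d^{k}|_{\st S_{k,i-1}}}_{p},\qquad C=\max\left\{ s^{\frac{1}{2}-\frac{1}{p}},\ \sqrt{\tfrac{p}{2}}\left(\tfrac{2s}{2-p}\right)^{\frac{1}{2}-\frac{1}{p}}\right\} .
\end{align*}

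Next I would sum over $i\geq2$. Since each block acts as the ``donor'' on the right-hand side exactly once, the sum of the donor $\ell_{p}$-norms ranges over all of $\st S\cmpl$, and I would pass from the sum of the block $\ell_{p}$-norms to $\norm{\vc d^{k}|_{\st S\cmpl}}_{p}$ using the subadditivity of $t\mapsto t^{p}$ for $p\in(0,1]$ (equivalently $\norm{\cdot}_{1}\leq\norm{\cdot}_{p}$ applied to the sequence of block-norms), together with the non-increasing ordering of the blocks. At $p=1$ this reassembly is lossless and the donor constant is exactly $C=s^{-1/2}$, so the chain collapses to the familiar estimate $\tfrac{1}{\sqrt{s}}\norm{\vc d^{k}|_{\st S\cmpl}}_{1}$, which matches the claimed prefactor since $\sqrt{2\cdot1}\,(\tfrac{2s}{1})^{-1/2}=s^{-1/2}$.

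The hard part will be pinning down the constant for $p<1$. The crude reading of Lemma \ref{lem:ShiftIneq} for \emph{adjacent} blocks always returns the first term $C=s^{1/2-1/p}$ (the $\xi$-dependent second term is strictly smaller whenever $r=l$), and the coarse bound on the reassembly step is wasteful once $p<1$; combining the two would only yield the coarser prefactor $s^{1/2-1/p}$, which exceeds the target $\sqrt{2p}\,(\tfrac{2s}{2-p})^{1/2-1/p}=\sqrt{2}\,\xi(p)\,s^{1/2-1/p}$. To reach the sharp value I expect to have to play the shifted bound against the trivial bound $\norm{\vc d^{k}|_{\st S_{k,i}}}_{2}\leq\norm{\vc d^{k}|_{\st S_{k,i}}}_{p}$ (valid because $p\leq2$), so that fast-decaying blocks are charged to their own $\ell_{p}$ mass while slowly-decaying blocks are charged to the preceding block; optimizing this trade-off over the block profile is what should manufacture the factor $\sqrt{2}\,\xi(p)$ and absorb the overlap. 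The single delicate point is then to verify that the optimized worst-case constant never exceeds $\sqrt{2}\,\xi(p)\,s^{1/2-1/p}$ across $p\in(0,1]$, with equality only at $p=1$ — a monotonicity/extremal computation over equal-magnitude block configurations that I would carry out last.
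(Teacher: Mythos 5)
Your reduction to adjacent $s$-blocks with $r=l=s$ provably cannot reach the stated constant, and you correctly sense this but never close the gap: the fix you sketch (interpolating between the shifted bound and $\norm{\cdot}_{2}\leq\norm{\cdot}_{p}$ and ``optimizing over the block profile'') is not carried out, and it is not the mechanism that actually produces the factor $\sqrt{2}\,\xi\left(p\right)$. The paper's proof uses a different geometry for the shifting inequality. First it pairs consecutive blocks, $\st T_{k,j}=\st S_{k,j}\cup\st S_{k,j+1}$, and uses the elementary inequality
\begin{align*}
\norm{\vc d^{k}|_{\st S_{k,j}}}_{2}+\norm{\vc d^{k}|_{\st S_{k,j+1}}}_{2}\leq\sqrt{2}\,\norm{\vc d^{k}|_{\st T_{k,j}}}_{2}
\end{align*}
to replace $\sum_{j\geq2}\norm{\vc d^{k}|_{\st S_{k,j}}}_{2}$ by $\sqrt{2}\sum_{i\geq1}\norm{\vc d^{k}|_{\st T_{k,2i}}}_{2}$; this is where the $\sqrt{2}$ comes from. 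It then applies Lemma \ref{lem:ShiftIneq} not to a $2s$-window split as $r=l=s$, but to the $3s$-window $\st S_{k,j-1}\cup\st S_{k,j}\cup\st S_{k,j+1}$ with $r=2s$ and $l=s$, charging the $\ell_{2}$ mass of $\st T_{k,j}$ to the $\ell_{p}$ mass of $\st T_{k,j-1}$. With $r=2s$ the first term of the max becomes $\left(2s\right)^{\frac{1}{2}-\frac{1}{p}}$ rather than $s^{\frac{1}{2}-\frac{1}{p}}$, and \emph{this} term is dominated by $\sqrt{p}\left(\frac{2s}{2-p}\right)^{\frac{1}{2}-\frac{1}{p}}$ — equivalently $p\log p+\left(2-p\right)\log\left(2-p\right)\geq0$ on $\left(0,1\right]$ — which is exactly the inequality that fails in your $r=l=s$ version (there the required ratio is $\sqrt{2}\,\xi\left(p\right)\leq1$, with strict inequality for $p<1$). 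Finally the donors $\st T_{k,2i-1}$ are pairwise disjoint and tile $\st S\cmpl$, so the reassembly $\sum_{i}\norm{\cdot}_{p}\leq\left(\sum_{i}\norm{\cdot}_{p}^{p}\right)^{1/p}$ is lossless with no double counting.

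So the concrete missing idea is the choice of an asymmetric window ($r=2s$, $l=s$) together with the $\sqrt{2}$ pairing step; without it your argument only yields the weaker prefactor $s^{\frac{1}{2}-\frac{1}{p}}$, which exceeds the target by the factor $1/\left(\sqrt{2}\,\xi\left(p\right)\right)>1$ for all $p\in\left(0,1\right)$ and blows up as $p\to0$. Your $p=1$ sanity check works only because that is the unique point where the two constants coincide.
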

\begin{proof}
Since $\st S_{k,j}$ and $\st S_{k,j+1}$ are disjoint and $\st T_{k,j}=\st S{}_{k,j}\cup\st S{}_{k,j+1}$
for $j\geq1$, we have 
\begin{align*}
\norm{\vc d^{k}|_{\st S_{k,j}}}_{2}+\norm{\vc d^{k}|_{\st S_{k,j+1}}}_{2} & \leq\sqrt{2}\norm{\vc d^{k}|_{\st T_{k,j}}}_{2}.
\end{align*}
Adding over even $j$'s then we deduce 
\begin{align*}
\sum_{j\geq2}\norm{\vc d^{k}|_{\st S_{k,j}}}_{2} & \leq\sqrt{2}\sum_{i\geq1}\norm{\vc d^{k}|_{\st T_{k,2i}}}_{2}.
\end{align*}
Because of the structure of the sets $\st T_{k,j}$, Lemma \ref{lem:ShiftIneq}
can be applied to obtain 
\begin{align}
\norm{\vc d^{k}|_{\st T_{k,j}}}_{2} & \leq\sqrt{p}\left(\frac{2s}{2-p}\right)^{\frac{1}{2}-\frac{1}{p}}\norm{\vc d^{k}|_{\st T_{k,j-1}}}_{p}.\label{eq:ell2ellpE1}
\end{align}
 To be precise, based on Lemma \ref{lem:ShiftIneq} the coefficient
on the RHS should be $C\!=\!\max\left\{ \left(2s\right)^{\frac{1}{2}-\frac{1}{p}},\sqrt{\frac{p}{2}}\left(\!\frac{2s}{2-p}\!\right)^{\frac{1}{2}-\frac{1}{p}}\right\} .$
For simplicity, however, we use the upper bound $C\leq\sqrt{p}\left(\frac{2s}{2-p}\right)^{\frac{1}{2}-\frac{1}{p}}$.
To verify this upper bound it suffices to show that $\left(2s\right)^{\frac{1}{2}-\frac{1}{p}}\leq\sqrt{p}\left(\frac{2s}{2-p}\right)^{\frac{1}{2}-\frac{1}{p}}$
or equivalently $\phi\left(p\right)=p\log p+\left(2-p\right)\log\left(2-p\right)\geq0$
for $p\in\left(0,1\right]$. Since $\phi\left(\cdot\right)$ is a
deceasing function over $\left(0,1\right]$, it attains its minimum
at $p=1$ which means that $\phi(p)\geq\phi(1)=0$ as desired.

Then \eqref{eq:ell2ellpE1} yields 
\begin{align*}
\sum_{j\geq2}\norm{\vc d^{k}|_{\st S_{k,j}}}_{2} & \leq\sqrt{2p}\left(\frac{2s}{2-p}\right)^{\frac{1}{2}-\frac{1}{p}}\sum_{i\geq1}\norm{\vc d^{k}|_{\st T_{k,2i-1}}}_{p}.
\end{align*}
Since $\omega_{1}+\omega_{2}+\cdots+\omega_{l}\leq\left(\omega_{1}^{p}+\omega_{2}^{p}+\cdots+\omega_{l}^{p}\right)^{\frac{1}{p}}$
holds for $\omega_{1},\cdots,\omega_{l}\geq0$ and $p\in\left(0,1\right]$,
we can write 
\begin{align*}
\sum_{i\geq1}\norm{\vc d^{k}|_{\st T_{k,2i-1}}}_{p} & \leq\left(\sum_{i\geq1}\norm{\vc d^{k}|_{\st T_{k,2i-1}}}_{p}^{p}\right)^{\frac{1}{p}}.
\end{align*}
 The desired result then follows using the fact that the sets $\st T_{k,2i-1}$
are disjoint and $\bigcup_{i\geq1}\st T_{k,2i-1}=\st S\cmpl$.
\end{proof}
Proof of the following Lemma mostly relies on some common inequalities
that have been used in the compressed sensing literature (see e.g.,
\citep[Theorem 2.1]{CahrtrandICASSP07} and \citep[Theorem 2]{GribonvalNielsen2007})
.
\begin{lem}
\label{lem:ellpell2}The error vector $\vc d^{k}$ satisfies $\norm{\vc d^{k}|_{\st S\cmpl}}_{p}\leq s^{\frac{1}{p}-\frac{1}{2}}\norm{\vc d^{k}|_{\st S}}_{2}$
for all $k=0,1,2,\cdots$.\end{lem}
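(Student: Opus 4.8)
The plan is to exploit the feasibility of the iterate $\vc x^{k}$ together with the subadditivity of $t\mapsto t^{p}$ for $0<p\le1$ to derive a ``cone'' inequality bounding the mass of $\vc d^{k}$ off the support $\st S$ by its mass on $\st S$, and then to pass from the $\ell_{p}$ quasi-norm to the $\ell_{2}$ norm using that $\st S$ has only $s$ elements.

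First I would record two facts. Because $\vc x^{k}$ is obtained by projecting onto $\widehat{\st B}=\st F_{p}(\widehat c)$, it is feasible, so $\norm{\vc x^{k}}_{p}^{p}\le\widehat c$. On the other hand, since $\widehat c=(1-\varepsilon)^{p}\norm{\vc x^{\star}}_{p}^{p}\le\norm{\vc x^{\star}}_{p}^{p}$, the point $\vc x^{\star}$ lies on or outside $\widehat{\st B}$, so its projection $\vc x_{\perp}^{\star}$ saturates the constraint, $\norm{\vc x_{\perp}^{\star}}_{p}^{p}=\widehat c$. Combining these gives $\norm{\vc x^{k}}_{p}^{p}\le\norm{\vc x_{\perp}^{\star}}_{p}^{p}$.

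Next I would split the $p$-th powers over $\st S$ and $\st S\cmpl$. By Lemma \ref{lem:SupportSub}, $\vc x_{\perp}^{\star}$ is supported on $\st S$, hence $\vc x^{k}|_{\st S\cmpl}=\vc d^{k}|_{\st S\cmpl}$ and $\norm{\vc x_{\perp}^{\star}}_{p}^{p}=\norm{\vc x_{\perp}^{\star}|_{\st S}}_{p}^{p}$. Writing $\vc x^{k}=\vc x_{\perp}^{\star}+\vc d^{k}$, the inequality $\norm{\vc x^{k}}_{p}^{p}\le\norm{\vc x_{\perp}^{\star}}_{p}^{p}$ becomes
\[
\norm{(\vc x_{\perp}^{\star}+\vc d^{k})|_{\st S}}_{p}^{p}+\norm{\vc d^{k}|_{\st S\cmpl}}_{p}^{p}\le\norm{\vc x_{\perp}^{\star}|_{\st S}}_{p}^{p}.
\]
Applying the subadditivity $|a-b|^{p}\le|a|^{p}+|b|^{p}$ coordinate-wise on $\st S$ (valid for $0<p\le1$) to bound $\norm{\vc x_{\perp}^{\star}|_{\st S}}_{p}^{p}\le\norm{(\vc x_{\perp}^{\star}+\vc d^{k})|_{\st S}}_{p}^{p}+\norm{\vc d^{k}|_{\st S}}_{p}^{p}$ and cancelling the common term yields the cone bound $\norm{\vc d^{k}|_{\st S\cmpl}}_{p}^{p}\le\norm{\vc d^{k}|_{\st S}}_{p}^{p}$, i.e. $\norm{\vc d^{k}|_{\st S\cmpl}}_{p}\le\norm{\vc d^{k}|_{\st S}}_{p}$.

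Finally, since $\vc d^{k}|_{\st S}$ is supported on the $s$-element set $\st S$ and $p\le2$, the standard power-mean embedding gives $\norm{\vc d^{k}|_{\st S}}_{p}\le s^{1/p-1/2}\norm{\vc d^{k}|_{\st S}}_{2}$, and chaining the two inequalities produces the claim. The only genuinely delicate point is the first step: one must justify that the projection $\vc x_{\perp}^{\star}$ attains the constraint with equality, so that $\widehat c$ may be replaced by $\norm{\vc x_{\perp}^{\star}}_{p}^{p}$; everything afterward is the subadditivity of the $\ell_{p}$ quasi-norm followed by a routine norm comparison.
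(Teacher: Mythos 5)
Your argument is correct and follows essentially the same route as the paper's proof: feasibility of $\vc x^{k}$ plus saturation of the constraint by $\vc x_{\perp}^{\star}$ gives $\norm{\vc x^{k}}_{p}^{p}\leq\norm{\vc x_{\perp}^{\star}}_{p}^{p}$, then subadditivity of $t\mapsto t^{p}$ on the coordinates in $\st S$ yields the cone bound $\norm{\vc d^{k}|_{\st S\cmpl}}_{p}\leq\norm{\vc d^{k}|_{\st S}}_{2}$-side comparison via the power-mean inequality. The only difference is cosmetic — you write $\vc x^{k}=\vc x_{\perp}^{\star}+\vc d^{k}$ and cancel, while the paper applies the reverse quasi-triangle inequality directly — and your explicit justification that the projection attains $\norm{\vc x_{\perp}^{\star}}_{p}^{p}=\widehat{c}$ is a point the paper asserts without comment.
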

\begin{proof}
Since $\mbox{supp}\left(\vc x_{\perp}^{\star}\right)\subseteq\st S=\text{supp}\left(\vc x^{\star}\right)$
we have $\vc d^{k}|_{\st S\cmpl}=\vc x^{k}|_{\st S\cmpl}$. Furthermore,
because $\vc x^{k}$ is a feasible point by assumption we have $\norm{\vc x^{k}}_{p}^{p}\leq\widehat{c}=\norm{\vc x_{\perp}^{\star}}_{p}^{p}$
that implies, 
\begin{align*}
\norm{\vc d^{k}|_{\st S\cmpl}}_{p}^{p} & =\norm{\vc x^{k}|_{\st S\cmpl}}_{p}^{p}\\
 & \leq\norm{\vc x_{\perp}^{\star}}_{p}^{p}-\norm{\vc x^{k}|_{\st S}}_{p}^{p}\\
 & \leq\norm{\vc x_{\perp}^{\star}-\vc x^{k}|_{\st S}}_{p}^{p}\\
 & =\norm{\vc d^{k}|_{\st S}}_{p}^{p}\\
 & \leq s^{1-\frac{p}{2}}\norm{\vc d^{k}|_{\st S}}_{2}^{p}, & \mbox{(power means inequality})
\end{align*}
 which yields the desired result.
\end{proof}
The next lemma is a straightforward extension of a previously known
result \citep[Lemma 3.1]{RIP-OMP-Davenport} to the case of complex
vectors and asymmetric RIP.
\begin{lem}
\label{lem:GRIP}For $\vc u,\vc v\in\mathbb{C}^{n}$ suppose that
matrix $\mx A$ satisfies RIP of order $\max\left(\norm{\vc u+\vc v}_{0},\norm{\vc u-\vc v}_{0}\right)$
with constants $\alpha$ and $\beta$. Then we have 
\begin{align*}
\left|\Re\left[\eta\left\langle \mx A\vc u,\mx A\vc v\right\rangle -\left\langle \vc u,\vc v\right\rangle \right]\right| & \leq\left(\frac{\eta\left(\alpha-\beta\right)}{2}+\left|\frac{\eta\left(\alpha+\beta\right)}{2}-1\right|\right)\norm{\vc u}_{2}\norm{\vc v}_{2}.
\end{align*}
\end{lem}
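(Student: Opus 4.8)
\emph{Proof proposal.} The plan is to reduce to the unit-normalized case and then combine the polarization identity with the RIP bounds. By homogeneity of both sides under separate positive real rescaling of $\vc u$ and $\vc v$, I may assume $\norm{\vc u}_{2}=\norm{\vc v}_{2}=1$, the cases $\vc u=\vc 0$ or $\vc v=\vc 0$ being trivial; note that rescaling alters neither $\mathrm{supp}\left(\vc u+\vc v\right)$ nor $\mathrm{supp}\left(\vc u-\vc v\right)$, so the hypothesized RIP order is preserved.

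First I would invoke the polarization identity for the real part of the Hermitian inner product, $\Re\left\langle \vc a,\vc b\right\rangle =\frac14\left(\norm{\vc a+\vc b}_2^2-\norm{\vc a-\vc b}_2^2\right)$, applied to both $\left(\mx A\vc u,\mx A\vc v\right)$ and $\left(\vc u,\vc v\right)$. This rewrites the target quantity as $\frac14\bigl[\bigl(\eta\norm{\mx A(\vc u+\vc v)}_2^2-\norm{\vc u+\vc v}_2^2\bigr)-\bigl(\eta\norm{\mx A(\vc u-\vc v)}_2^2-\norm{\vc u-\vc v}_2^2\bigr)\bigr]$. The crucial point is that $\vc u+\vc v$ and $\vc u-\vc v$ are sparse of order at most $\max\left(\norm{\vc u+\vc v}_0,\norm{\vc u-\vc v}_0\right)$, so the RIP hypothesis supplies the two-sided bound $(\eta\beta-1)\norm{\vc w}_2^2\le \eta\norm{\mx A\vc w}_2^2-\norm{\vc w}_2^2\le(\eta\alpha-1)\norm{\vc w}_2^2$ for each of $\vc w=\vc u+\vc v$ and $\vc w=\vc u-\vc v$.

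Substituting the upper RIP bound for the $\vc u+\vc v$ term and the lower bound for the $\vc u-\vc v$ term (and then the reverse for the opposite direction) yields the estimate $\frac14\bigl[(\eta\alpha-1)\norm{\vc u+\vc v}_2^2-(\eta\beta-1)\norm{\vc u-\vc v}_2^2\bigr]$ together with its mirror image. To simplify, I would write $\eta\alpha=P+Q$ and $\eta\beta=P-Q$ with $P=\tfrac{\eta(\alpha+\beta)}2$ and $Q=\tfrac{\eta(\alpha-\beta)}2\ge0$, and then invoke the parallelogram identities $\norm{\vc u+\vc v}_2^2+\norm{\vc u-\vc v}_2^2=4$ and $\norm{\vc u+\vc v}_2^2-\norm{\vc u-\vc v}_2^2=4\Re\left\langle \vc u,\vc v\right\rangle$. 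A short computation collapses both estimates to $(P-1)\Re\left\langle \vc u,\vc v\right\rangle\pm Q$, whereupon the Cauchy--Schwarz bound $\left|\Re\left\langle \vc u,\vc v\right\rangle\right|\le\norm{\vc u}_2\norm{\vc v}_2=1$ gives $\bigl|\eta\Re\left\langle \mx A\vc u,\mx A\vc v\right\rangle-\Re\left\langle \vc u,\vc v\right\rangle\bigr|\le|P-1|+Q$, which is precisely the claimed constant $\left|\frac{\eta(\alpha+\beta)}2-1\right|+\frac{\eta(\alpha-\beta)}2$.

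I do not anticipate a real obstacle, since this is a routine polarization argument; the only points demanding care are the bookkeeping of signs so that the separate upper and lower estimates fuse into a single absolute-value bound, and the use of the \emph{real-part} polarization identity (rather than its real-scalar version) to remain valid over $\mathbb{C}$. Undoing the normalization by scaling restores the factor $\norm{\vc u}_2\norm{\vc v}_2$ and completes the proof.
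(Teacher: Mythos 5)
Your proof is correct and follows essentially the same route as the paper's: the polarization identity applied to $\mx A\vc u\pm\mx A\vc v$, the two-sided RIP bound on $\vc u\pm\vc v$, normalization to unit vectors, and the parallelogram law to collapse the estimate to $\left|\frac{\eta(\alpha+\beta)}{2}-1\right|+\frac{\eta(\alpha-\beta)}{2}$. The only quibble is that your claim that rescaling preserves $\mathrm{supp}\left(\vc u\pm\vc v\right)$ is not literally true when $\vc u$ and $\vc v$ are scaled by different positive factors (cancellations can appear or disappear, so the rescaled sum or difference may be less sparse than the original), but the paper's own proof normalizes $\vc u$ and $\vc v$ separately in exactly the same way, and in every application the RIP order actually available is $\left|\mathrm{supp}\left(\vc u\right)\cup\mathrm{supp}\left(\vc v\right)\right|$, which covers all the rescaled sums and differences.
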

\begin{proof}
If either of the vectors $\vc u$ and $\vc v$ is zero the claim becomes
trivial. So without loss of generality we assume that none of these
vectors is zero. The RIP condition holds for the vectors $\vc{u\pm v}$
and we have 
\begin{align*}
\beta\norm{\vc u\pm\vc v}_{2}^{2} & \leq\norm{\mx A\left(\vc u\pm\vc v\right)}_{2}^{2}\leq\alpha\norm{\vc u\pm\vc v}_{2}^{2}.
\end{align*}
 Therefore, we obtain 
\begin{align*}
\Re\left\langle \mx A\vc u,\mx A\vc v\right\rangle  & =\frac{1}{4}\left(\norm{\mx A\left(\vc u+\vc v\right)}_{2}^{2}-\norm{\mx A\left(\vc u-\vc v\right)}_{2}^{2}\right)\\
 & \leq\frac{1}{4}\left(\alpha\norm{\vc u+\vc v}_{2}^{2}-\beta\norm{\vc u-\vc v}_{2}^{2}\right)\\
 & =\frac{\alpha-\beta}{4}\left(\norm{\vc u}_{2}^{2}+\norm{\vc v}_{2}^{2}\right)+\frac{\alpha+\beta}{2}\Re\left\langle \vc u,\vc v\right\rangle .
\end{align*}
 Applying this inequality for vectors $\frac{\vc u}{\norm{\vc u}_{2}}$
and $\frac{\vc v}{\norm{\vc v}_{2}}$ yields 
\begin{align*}
\Re\left[\eta\left\langle \mx A\frac{\vc u}{\norm{\vc u}_{2}},\mx A\frac{\vc v}{\norm{\vc v}_{2}}\right\rangle -\left\langle \frac{\vc u}{\norm{\vc u}_{2}},\frac{\vc v}{\norm{\vc v}_{2}}\right\rangle \right] & \leq\frac{\eta\left(\alpha-\beta\right)}{2}+\left(\frac{\eta\left(\alpha+\beta\right)}{2}-1\right)\Re\left\langle \frac{\vc u}{\norm{\vc u}_{2}},\frac{\vc v}{\norm{\vc v}_{2}}\right\rangle \\
 & \leq\frac{\eta\left(\alpha-\beta\right)}{2}+\left|\frac{\eta\left(\alpha+\beta\right)}{2}-1\right|.
\end{align*}
 Similarly it can be shown that 
\begin{align*}
\Re\left[\eta\left\langle \mx A\frac{\vc u}{\norm{\vc u}_{2}},\mx A\frac{\vc v}{\norm{\vc v}_{2}}\right\rangle -\left\langle \frac{\vc u}{\norm{\vc u}_{2}},\frac{\vc v}{\norm{\vc v}_{2}}\right\rangle \right] & \geq-\frac{\eta\left(\alpha-\beta\right)}{2}-\left|\frac{\eta\left(\alpha+\beta\right)}{2}-1\right|.
\end{align*}
 The desired result follows immediately by multiplying the last two
inequalities by $\norm{\vc u}_{2}\norm{\vc v}_{2}$.\end{proof}
\begin{lem}
\label{lem:IterationInvariant}If the step-size of $\ell_{p}$-PGD
obeys $\left|\eta_{k}\left(\alpha_{3s}+\beta_{3s}\right)/2-1\right|\leq\tau$
for some $\tau\geq0$, then we have 
\begin{align*}
\Re\left[\left\langle \vc d^{k},\vc d^{k+1}\right\rangle -\eta_{k}\left\langle \mx A\vc d^{k},\mx A\vc d^{k+1}\right\rangle \right] & \leq\left(\left(1+\tau\right)\rho_{3s}+\tau\right)\left(1+\sqrt{2p}\left(\frac{2}{2-p}\right)^{\frac{1}{2}-\frac{1}{p}}\right)^{2}\norm{\vc d^{k}}_{2}\norm{\vc d^{k+1}}_{2}.
\end{align*}
\end{lem}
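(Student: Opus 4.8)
The plan is to reduce the claim to a blockwise application of Lemma~\ref{lem:GRIP}. Write $g(\vc u,\vc v)=\Re\left[\left\langle \vc u,\vc v\right\rangle -\eta_{k}\left\langle \mx A\vc u,\mx A\vc v\right\rangle \right]$, which is additive in each argument, and decompose the two error vectors along the partition introduced above, $\vc d^{k}=\vc d^{k}|_{\st T_{k}}+\sum_{i\geq2}\vc d^{k}|_{\st S_{k,i}}$ and $\vc d^{k+1}=\vc d^{k+1}|_{\st T_{k+1}}+\sum_{j\geq2}\vc d^{k+1}|_{\st S_{k+1,j}}$. By additivity, $g(\vc d^{k},\vc d^{k+1})$ equals the double sum of $g$ evaluated on one block of $\vc d^{k}$ against one block of $\vc d^{k+1}$, and since $g(\vc d^{k},\vc d^{k+1})\leq\sum|g(\cdot,\cdot)|$ it suffices to bound each term and sum.

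First I would fix the uniform per-pair constant. For a block pair $(\vc u,\vc v)$ the vectors $\vc u\pm\vc v$ are supported on the union of the two index sets, so provided this union has at most $3s$ elements the order-$3s$ RIP applies and Lemma~\ref{lem:GRIP} bounds $|g(\vc u,\vc v)|$ by $\left(\tfrac{\eta_{k}(\alpha_{3s}-\beta_{3s})}{2}+\left|\tfrac{\eta_{k}(\alpha_{3s}+\beta_{3s})}{2}-1\right|\right)\norm{\vc u}_{2}\norm{\vc v}_{2}$. The step-size hypothesis makes the second summand at most $\tau$, and from $\tfrac{\eta_{k}(\alpha_{3s}+\beta_{3s})}{2}\leq1+\tau$ it follows that $\tfrac{\eta_{k}(\alpha_{3s}-\beta_{3s})}{2}\leq(1+\tau)\rho_{3s}$; hence every admissible pair is controlled by the single constant $(1+\tau)\rho_{3s}+\tau$.

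The step I expect to be the main obstacle is verifying the $3s$ support budget, which is delicate precisely for the head--head term $g(\vc d^{k}|_{\st T_{k}},\vc d^{k+1}|_{\st T_{k+1}})$: naively $|\st T_{k}|+|\st T_{k+1}|=4s$, which would only license order-$4s$ RIP. What rescues order $3s$ is that, by Lemma~\ref{lem:SupportSub}, both $\st T_{k}$ and $\st T_{k+1}$ contain the common set $\st S=\mathrm{supp}(\vc x^{\star})$, so $\st T_{k}\cup\st T_{k+1}=\st S\cup\st S_{k,1}\cup\st S_{k+1,1}$ has at most $3s$ elements. Every other pair---a $2s$-sparse head against an $s$-sparse tail block, or two $s$-sparse tail blocks---fits in $3s$ automatically, so Lemma~\ref{lem:GRIP} applies to all terms.

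Summing the blockwise bounds and factoring the double sum gives $g(\vc d^{k},\vc d^{k+1})\leq\left((1+\tau)\rho_{3s}+\tau\right)\left(\norm{\vc d^{k}|_{\st T_{k}}}_{2}+\sum_{i\geq2}\norm{\vc d^{k}|_{\st S_{k,i}}}_{2}\right)\left(\norm{\vc d^{k+1}|_{\st T_{k+1}}}_{2}+\sum_{j\geq2}\norm{\vc d^{k+1}|_{\st S_{k+1,j}}}_{2}\right)$. It remains to bound each factor by $\left(1+\sqrt{2p}\left(\tfrac{2}{2-p}\right)^{1/2-1/p}\right)\norm{\vc d^{k}}_{2}$. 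For the tail I would chain Lemma~\ref{lem:ell2ellp} with Lemma~\ref{lem:ellpell2}; the powers of $s$ cancel, since $\sqrt{2p}\left(\tfrac{2s}{2-p}\right)^{1/2-1/p}s^{1/p-1/2}=\sqrt{2p}\left(\tfrac{2}{2-p}\right)^{1/2-1/p}$, leaving $\sum_{i\geq2}\norm{\vc d^{k}|_{\st S_{k,i}}}_{2}\leq\sqrt{2p}\left(\tfrac{2}{2-p}\right)^{1/2-1/p}\norm{\vc d^{k}|_{\st S}}_{2}$. Bounding both $\norm{\vc d^{k}|_{\st T_{k}}}_{2}$ and $\norm{\vc d^{k}|_{\st S}}_{2}$ by $\norm{\vc d^{k}}_{2}$ then yields the claimed factor, and identically for $\vc d^{k+1}$; multiplying the two factors produces the squared coefficient in the statement. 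The degenerate case $p=0$ needs no tail estimate, since $\vc d^{k}$ is then $2s$-sparse and $\xi(0)=0$, so the bound holds with the empty tail sum.
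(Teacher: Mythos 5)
Your proposal is correct and follows essentially the same route as the paper's proof: the same four-way block decomposition, the same application of Lemma~\ref{lem:GRIP} with the constant $(1+\tau)\rho_{3s}+\tau$, and the same chaining of Lemmas~\ref{lem:ell2ellp} and~\ref{lem:ellpell2} to absorb the tail blocks. Your explicit justification that $\st T_{k}\cup\st T_{k+1}=\st S\cup\st S_{k,1}\cup\st S_{k+1,1}$ has at most $3s$ elements is a detail the paper leaves implicit, and is worth having spelled out.
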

\begin{proof}
Note that 
\begin{align}
\Re\left[\left\langle \vc d^{k},\vc d^{k+1}\right\rangle -\eta_{k}\left\langle \mx A\vc d^{k},\mx A\vc d^{k+1}\right\rangle \right] & =\Re\left[\left\langle \vc d^{k}|_{\st T_{k}},\vc d^{k+1}|_{\st T_{k+1}}\right\rangle -\eta_{k}\left\langle \mx A\vc d^{k}|_{\st T_{k}},\mx A\vc d^{k+1}|_{\st T_{k+1}}\right\rangle \right]\nonumber \\
 & +\sum_{i\geq2}\Re\left[\left\langle \vc d^{k}|_{\st S_{k,i}},\vc d^{k+1}|_{\st T_{k+1}}\right\rangle -\eta_{k}\left\langle \mx A\vc d^{k}|_{\st S_{k,i}},\mx A\vc d^{k+1}|_{\st T_{k+1}}\right\rangle \right]\nonumber \\
 & +\sum_{j\geq2}\Re\left[\left\langle \vc d^{k}|_{\st T_{k}},\vc d^{k+1}|_{\st S_{k+1,j}}\right\rangle -\eta_{k}\left\langle \mx A\vc d^{k}|_{\st T_{k}},\mx A\vc d^{k+1}|_{\st S_{k+1,j}}\right\rangle \right]\nonumber \\
 & +\sum_{i,j\geq2}\Re\left[\left\langle \vc d^{k}|_{\st S_{k,i}},\vc d^{k+1}|_{\st S_{k+1,j}}\right\rangle -\eta_{k}\left\langle \mx A\vc d^{k}|_{\st S_{k,i}},\mx A\vc d^{k+1}|_{\st S_{k+1,j}}\right\rangle \right].\label{eq:L23E1}
\end{align}
Note that $\left|\st T_{k}\cup\st T_{k+1}\right|\leq3s$. Furthermore,
for $i,j\!\geq\!2$ we have $\left|\st T_{k}\cup\st S_{k+1,j}\right|\!\leq\!3s$,
$\left|\st T_{k+1}\cup\st S_{k,i}\right|\!\leq\!3s$, and $\left|\st S_{k,i}\cup\st S_{k+1,j}\right|\leq2s$.
Therefore, by applying Lemma \ref{lem:GRIP} for each of the summands
in \eqref{eq:L23E1} and using the fact that 
\begin{align*}
\rho'_{3s} & :=\left(1+\tau\right)\rho_{3s}+\tau\\
 & \geq\eta_{k}\left(\alpha_{3s}-\beta_{3s}\right)/2+\left|\eta_{k}\left(\alpha_{3s}+\beta_{3s}\right)/2-1\right|
\end{align*}
we obtain 
\begin{align*}
\Re\left[\left\langle \vc d^{k},\vc d^{k+1}\right\rangle -\eta_{k}\left\langle \mx A\vc d^{k},\mx A\vc d^{k+1}\right\rangle \right] & \leq\rho'_{3s}\norm{\vc d^{k}|_{\st T_{k}}}_{2}\norm{\vc d^{k+1}|_{\st T_{k+1}}}_{2}+\sum_{i\geq2}\rho'_{3s}\norm{\vc d^{k}|_{\st S_{k,i}}}_{2}\norm{\vc d^{k+1}|_{\st T_{k+1}}}_{2}\\
 & +\sum_{j\geq2}\rho'_{3s}\norm{\vc d^{k}|_{\st T_{k}}}_{2}\norm{\vc d^{k+1}|_{\st S_{k+1,j}}}_{2}+\sum_{i,j\geq2}\rho'_{3s}\norm{\vc d^{k}|_{\st S_{k,i}}}_{2}\norm{\vc d^{k+1}|_{\st S_{k+1,j}}}_{2}.
\end{align*}
 Hence, applying Lemma \ref{lem:ell2ellp} yields 
\begin{align*}
\Re\left[\left\langle \vc d^{k},\vc d^{k+1}\right\rangle -\eta_{k}\left\langle \mx A\vc d^{k},\mx A\vc d^{k+1}\right\rangle \right] & \leq\rho'_{3s}\norm{\vc d^{k}|_{\st T_{k}}}_{2}\norm{\vc d^{k+1}|_{\st T_{k+1}}}_{2}\\
 & +\sqrt{2p}\left(\frac{2s}{2-p}\right)^{\frac{1}{2}-\frac{1}{p}}\rho'_{3s}\norm{\vc d^{k}|_{\st S^{c}}}_{p}\norm{\vc d^{k+1}|_{\st T_{k+1}}}_{2}\displaybreak[0]\\
 & +\sqrt{2p}\left(\frac{2s}{2-p}\right)^{\frac{1}{2}-\frac{1}{p}}\rho'_{3s}\norm{\vc d^{k}|_{\st T_{k}}}_{2}\norm{\vc d^{k+1}|_{\st S^{c}}}_{p}\\
 & +2p\left(\frac{2s}{2-p}\right)^{1-\frac{2}{p}}\rho'_{3s}\norm{\vc d^{k}|_{\st S^{c}}}_{p}\norm{\vc d^{k+1}|_{\st S^{c}}}_{p}.
\end{align*}
Then it follows from Lemma \ref{lem:ellpell2}, 
\begin{align*}
\Re\left[\left\langle \vc d^{k},\vc d^{k+1}\right\rangle -\eta_{k}\left\langle \mx A\vc d^{k},\mx A\vc d^{k+1}\right\rangle \right] & \leq\rho'_{3s}\norm{\vc d^{k}|_{\st T_{k}}}_{2}\norm{\vc d^{k+1}|_{\st T_{k+1}}}_{2}\\
 & +\sqrt{2p}\left(\frac{2}{2-p}\right)^{\frac{1}{2}-\frac{1}{p}}\rho'_{3s}\norm{\vc d^{k}|_{\st S}}_{2}\norm{\vc d^{k+1}|_{\st T_{k+1}}}_{2}\\
 & +\sqrt{2p}\left(\frac{2}{2-p}\right)^{\frac{1}{2}-\frac{1}{p}}\rho'_{3s}\norm{\vc d^{k}|_{\st T_{k}}}_{2}\norm{\vc d^{k+1}|_{\st S}}_{2}\\
 & +2p\left(\frac{2}{2-p}\right)^{1-\frac{2}{p}}\rho'_{3s}\norm{\vc d^{k}|_{\st S}}_{2}\norm{\vc d^{k+1}|_{\st S}}_{2}\displaybreak[0]\\
 & \leq\rho'_{3s}\left(1+\sqrt{2p}\left(\frac{2}{2-p}\right)^{\frac{1}{2}-\frac{1}{p}}\right)^{2}\norm{\vc d^{k}}_{2}\norm{\vc d^{k+1}}_{2}
\end{align*}

\end{proof}
Now we are ready to prove the accuracy guarantees for the $\ell_{p}$-PGD
algorithm.
\begin{proof}[\textbf{Proof of Theorem \ref{thm:LpPGD}}]
 Recall that $\gamma$ is defined by \eqref{eq:gamma}. It follows
from Lemmas \ref{lem:OptimCondition} and \ref{lem:IterationInvariant}
that 
\begin{align*}
\norm{\vc d^{k}}_{2}^{2} & \leq2\gamma\norm{\vc d^{k}}_{2}\norm{\vc d^{k-1}}_{2}+2\eta_{k}\Re\left\langle \mx A\vc d^{k},\mx A\vc d^{\star}+\vc e\right\rangle \\
 & \leq2\gamma\norm{\vc d^{k}}_{2}\norm{\vc d^{k-1}}_{2}+2\eta_{k}\norm{\mx A\vc d^{k}}_{2}\norm{\mx A\vc d^{\star}+\vc e}_{2}.
\end{align*}
 Furthermore, using \eqref{eq:ell2ellpE1} and Lemma \ref{lem:ellpell2}
we deduce 
\begin{align*}
\norm{\mx A\vc d^{k}}_{2} & \leq\norm{\mx A\vc d^{k}|_{\st T_{k}}}_{2}+\sum_{i\geq1}\norm{\mx A\vc{\vc d}^{k}|_{\st T_{k,2i}}}_{2}\\
 & \leq\sqrt{\alpha_{2s}}\norm{\vc d^{k}|_{\st T_{k}}}_{2}+\sum_{i\geq1}\sqrt{\alpha_{2s}}\norm{\vc{\vc d}^{k}|_{\st T_{k,2i}}}_{2}\\
 & \leq\sqrt{\alpha_{2s}}\norm{\vc d^{k}|_{\st T_{k}}}_{2}+\sqrt{\alpha_{2s}}\sqrt{p}\left(\frac{2s}{2-p}\right)^{\frac{1}{2}-\frac{1}{p}}\sum_{i\geq1}\norm{\vc{\vc d}^{k}|_{\st T_{k,2i-1}}}_{p}\\
 & \leq\sqrt{\alpha_{2s}}\norm{\vc d^{k}|_{\st T_{k}}}_{2}+\sqrt{\alpha_{2s}}\sqrt{p}\left(\frac{2s}{2-p}\right)^{\frac{1}{2}-\frac{1}{p}}\norm{\vc{\vc d}^{k}|_{\st S^{c}}}_{p}\displaybreak[0]\\
 & \leq\sqrt{\alpha_{2s}}\norm{\vc d^{k}|_{\st T_{k}}}_{2}+\sqrt{\alpha_{2s}}\sqrt{p}\left(\frac{2}{2-p}\right)^{\frac{1}{2}-\frac{1}{p}}\norm{\vc{\vc d}^{k}|_{\st S}}_{2}\\
 & \leq\sqrt{\alpha_{2s}}\left(1+\sqrt{p}\left(\frac{2}{2-p}\right)^{\frac{1}{2}-\frac{1}{p}}\right)\norm{\vc{\vc d}^{k}}_{2}.
\end{align*}
Therefore, 
\begin{align*}
\norm{\vc d^{k}}_{2}^{2} & \leq2\gamma\norm{\vc d^{k}}_{2}\norm{\vc d^{k-1}}_{2}+2\eta_{k}\sqrt{\alpha_{2s}}\left(1+\sqrt{p}\left(\frac{2}{2-p}\right)^{\frac{1}{2}-\frac{1}{p}}\right)\norm{\vc d^{k}}_{2}\norm{\mx A\vc d^{\star}+\vc e}_{2},
\end{align*}
 which after canceling $\norm{\vc d^{k}}_{2}$ yields 
\begin{align*}
\norm{\vc d^{k}}_{2} & \leq2\gamma\norm{\vc d^{k-1}}_{2}+2\eta_{k}\sqrt{\alpha_{2s}}\left(1+\sqrt{p}\left(\frac{2}{2-p}\right)^{\frac{1}{2}-\frac{1}{p}}\right)\norm{\mx A\vc d^{\star}+\vc e}_{2}\\
 & =2\gamma\norm{\vc d^{k-1}}_{2}+2\eta_{k}\left(\alpha_{3s}+\beta_{3s}\right)\frac{\sqrt{\alpha_{2s}}}{\alpha_{3s}+\beta_{3s}}\left(1+\sqrt{p}\left(\frac{2}{2-p}\right)^{\frac{1}{2}-\frac{1}{p}}\right)\norm{\mx A\vc d^{\star}+\vc e}_{2}\\
 & \leq2\gamma\norm{\vc d^{k-1}}_{2}+4\left(1+\tau\right)\frac{\sqrt{\alpha_{2s}}}{\alpha_{3s}+\beta_{3s}}\left(1+\sqrt{p}\left(\frac{2}{2-p}\right)^{\frac{1}{2}-\frac{1}{p}}\right)\left(\norm{\mx A\vc d^{\star}}_{2}+\norm{\vc e}_{2}\right).
\end{align*}
Since $\vc x_{\perp}^{\star}$ is a projection of $\vc x^{\star}$
onto the feasible set $\widehat{\st B}$ and $\left(\frac{\widehat{c}}{\norm{\vc x^{\star}}_{p}^{p}}\right)^{1/p}\vc x^{\star}\in\widehat{\st B}$
we have 
\begin{align*}
\norm{\vc d^{\star}}_{2} & =\norm{\vc x_{\perp}^{\star}-\vc x^{\star}}_{2}\\
 & \leq\norm{\left(\frac{\widehat{c}}{\norm{\vc x^{\star}}_{p}^{p}}\right)^{1/p}\vc x^{\star}-\vc x^{\star}}_{2}\\
 & =\varepsilon\norm{\vc x^{\star}}_{2}.
\end{align*}
Furthermore, $\mbox{supp}\left(\vc d^{\star}\right)\subseteq\st S,$
thereby we can use RIP to obtain 
\begin{align*}
\norm{\mx A\vc d^{\star}}_{2} & \leq\sqrt{\alpha_{s}}\norm{\vc d^{\star}}_{2}\\
 & \leq\varepsilon\sqrt{\alpha_{s}}\norm{\vc x^{\star}}_{2}.
\end{align*}
Hence, 
\begin{align*}
\norm{\vc d^{k}}_{2} & \leq2\gamma\norm{\vc d^{k-1}}_{2}+4\left(1+\tau\right)\frac{\sqrt{\alpha_{2s}}}{\alpha_{3s}+\beta_{3s}}\left(1+\sqrt{p}\left(\frac{2}{2-p}\right)^{\frac{1}{2}-\frac{1}{p}}\right)\left(\varepsilon\sqrt{\alpha_{s}}\norm{\vc x^{\star}}_{2}+\norm{\vc e}_{2}\right)\\
 & \leq2\gamma\norm{\vc d^{k-1}}_{2}+2\left(1+\tau\right)\left(1+\sqrt{p}\left(\frac{2}{2-p}\right)^{\frac{1}{2}-\frac{1}{p}}\right)\left(\varepsilon\left(1+\rho_{3s}\right)\norm{\vc x^{\star}}_{2}+\frac{2\sqrt{\alpha_{2s}}}{\alpha_{3s}+\beta_{3s}}\norm{\vc e}_{2}\right).
\end{align*}
Applying this inequality recursively and using the fact that 
\begin{align*}
\sum_{i=0}^{k-1}\left(2\gamma\right)^{i} & <\sum_{i=0}^{\infty}\left(2\gamma\right)^{i}=\frac{1}{1-2\gamma},
\end{align*}
which holds because of the assumption $\gamma<\frac{1}{2}$, we can
finally deduce 
\begin{align*}
\norm{\vc x^{k}-\vc x^{\star}}_{2} & =\norm{\vc d^{k}-\vc d^{\star}}_{2}\\
 & \leq\norm{\vc d^{k}}_{2}+\norm{\vc d^{\star}}_{2}\\
 & \leq\left(2\gamma\right)^{k}\norm{\vc x_{\perp}^{\star}}_{2}+\frac{2\left(1+\tau\right)}{1-2\gamma}\left(1+\xi\left(p\right)\right)\left(\varepsilon\left(1+\rho_{3s}\right)\norm{\vc x^{\star}}_{2}+\frac{2\sqrt{\alpha_{2s}}}{\alpha_{3s}+\beta_{3s}}\norm{\vc e}_{2}\right)+\norm{\vc d^{\star}}_{2}\\
 & \leq\left(2\gamma\right)^{k}\norm{\vc x^{\star}}_{2}+\frac{2\left(1+\tau\right)}{1-2\gamma}\left(1+\xi\left(p\right)\right)\left(\varepsilon\left(1+\rho_{3s}\right)\norm{\vc x^{\star}}_{2}+\frac{2\sqrt{\alpha_{2s}}}{\alpha_{3s}+\beta_{3s}}\norm{\vc e}_{2}\right)+\varepsilon\norm{\vc x^{\star}}_{2},
\end{align*}
 where $\xi\left(p\right)=\sqrt{p}\left(\frac{2}{2-p}\right)^{\frac{1}{2}-\frac{1}{p}}$
as defined in the statement of the theorem.
\end{proof}

\section{\label{sec:discuss}Discussion}

In this paper we studied the accuracy of the Projected Gradient Descent
algorithm in solving sparse least squares problems where sparsity
is dictated by an $\ell_{p}$-norm constraint. Assuming that one has
an algorithm that can find a projection of any given point onto $\ell_{p}$-balls
with $p\in\left[0,1\right]$, we have shown that the PGD method converges
to the true signal, up to the statistical precision, at a linear rate.
The convergence guarantees in this paper are obtained by requiring
proper RIP conditions to hold for the measurement matrix. By varying
$p$ from zero to one, these sufficient conditions become more stringent
while robustness to noise and convergence rate worsen. This behavior
suggests that smaller values of $p$ are preferable, and in fact the
PGD method at $p=0$ (i.e., the IHT algorithm) outperforms the PGD
method at $p>0$ in every aspect. These conclusions, however, are
not definitive as we have merely presented sufficient conditions for
accuracy of the PGD method.

Unfortunately and surprisingly, for $p\in\left(0,1\right)$ the algorithm
for projection onto $\ell_{p}$-balls is not as simple as the cases
of $p=0$ and $p=1$, leaving practicality of the algorithm unclear
for the intermediate values $p$. We have shown (see the Appendix)
that a projection $\vc x^{\perp}$ of point $\vc x\in\mathbb{C}^{n}$
has the following properties
\begin{enumerate}[label=\textnormal{(\roman*)}]
\item  $\left|x_{i}^{\perp}\right|\leq\left|x_{i}\right|$ for all $i\in\left[n\right]$
while there is at most one $i\in\left[n\right]$ such that $\left|x_{i}^{\perp}\right|<\frac{1-p}{2-p}\left|x_{i}\right|$,
\item $\Arg\left(x_{i}\right)=\Arg\left(x_{i}^{\perp}\right)$ for $i\in\left[n\right]$,
\item if $\left|x_{i}\right|>\left|x_{j}\right|$ for some $i,j\in\left[n\right]$
then $\left|x_{i}^{\perp}\right|\geq\left|x_{j}^{\perp}\right|$,
and
\item there exist $\lambda\geq0$ such that for all $i\in\text{supp}\left(\vc x^{\perp}\right)$
we have $\left|x_{i}^{\perp}\right|^{1-p}\left(\left|x_{i}\right|-\left|x_{i}^{\perp}\right|\right)=p\lambda$. 
\end{enumerate}
However, these properties are not sufficient for full characterization
of a projection. One may ask that if the PGD method performs the best
at $p=0$ then why is it important at all to design a projection algorithm
for $p>0$?  We believe that developing an efficient algorithm for
projection onto $\ell_{p}$-balls with $p\in\left(0,1\right)$ is
an interesting problem that can provide a building block for other
methods of sparse signal estimation involving the $\ell_{p}$-norm.
Furthermore, studying this problem may help to find an insight on
how the complexity of these algorithms vary in terms of $p$.

In future work, we would like to examine the performance of more sophisticated
first-order methods such as the Nesterov's optimal gradient methods
\citep{Nesterov04} for $\ell_{p}$-constrained least squares problems.
Furthermore, it could be possible to extend the provided framework
further to analyze $\ell_{p}$-constrained minimization with objective
functions other than the squared error. This generalized framework
can be used in problems such as regression with generalized linear
models that arise in statistics and machine learning.


\bibliographystyle{elsart-num-sort}
\bibliography{references}

\appendix
\renewcommand{\thesection}{Appendix \Alph{section}}

\section{Lemmas for Characterization of a Projection onto $\ell_{p}$-balls}

\renewcommand{\thesection}{\Alph{section}}

In what follows we assume that $\st B$ is an $\ell_{p}$-ball with
$p$-radius $c$ (i.e., $\st B=\st F_{p}\left(c\right)$). For $\vc x\in\mathbb{C}^{n}$
we derive some properties of 
\begin{align}
\vc x^{\perp} & \in\arg\min\ \frac{1}{2}\norm{\vc x-\vc u}_{2}^{2}\quad\mbox{s.t. }\vc u\in\st B,\label{eq:LpProjection}
\end{align}
a projection of $\vc x$ onto $\st B$.
\begin{lem}
\label{lem:PhaseMatch}Let $\vc x^{\perp}$ be a projection of $\vc x$
onto $\st B$. Then for every $i\in\left\{ 1,2,\ldots,n\right\} $
we have $\mathrm{Arg}\left(x_{i}\right)=\mathrm{Arg}\left(x_{i}^{\perp}\right)$
and $\left|x_{i}^{\perp}\right|\leq\left|x_{i}\right|$.\end{lem}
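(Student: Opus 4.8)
The plan is to prove both claims by a replacement (exchange) argument: assuming a projection $\vc x^{\perp}$ violated either property, I would construct a competitor $\vc u \in \st B$ strictly closer to $\vc x$, contradicting optimality in \eqref{eq:LpProjection}. The key observation is that the feasibility constraint $\norm{\vc u}_p^p \leq c$ depends only on the magnitudes $\lvert u_i\rvert$, whereas the objective $\norm{\vc x - \vc u}_2^2 = \sum_i \lvert x_i - u_i\rvert^2$ couples magnitude and phase. So I would treat the two coordinates of optimization --- the phase and the magnitude of each $x_i^{\perp}$ --- separately, exploiting that adjusting the phase alone leaves feasibility untouched.

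First I would establish the phase-matching claim $\Arg(x_i) = \Arg(x_i^{\perp})$. Fix any coordinate $i$ and suppose, toward a contradiction, that $x_i^{\perp} \neq 0$ but its phase differs from that of $x_i$ (if $x_i = 0$ the claim is vacuous or handled by the magnitude step). Define $\vc u$ to equal $\vc x^{\perp}$ in every coordinate except the $i$-th, where I set $u_i = \lvert x_i^{\perp}\rvert \, e^{\mathrm{j}\Arg(x_i)}$, i.e. I rotate $x_i^{\perp}$ to align its phase with $x_i$ while keeping its modulus. Since $\lvert u_i\rvert = \lvert x_i^{\perp}\rvert$, the vector $\vc u$ has the same $\ell_p$ magnitudes and hence $\vc u \in \st B$. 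For the objective, the elementary fact that among all complex numbers of a fixed modulus $r$, the one minimizing the distance to $x_i$ is $r\,e^{\mathrm{j}\Arg(x_i)}$ (this is just $\lvert x_i - z\rvert^2 = \lvert x_i\rvert^2 + r^2 - 2\lvert x_i\rvert r \cos(\theta)$ minimized at $\theta = 0$) shows $\lvert x_i - u_i\rvert \leq \lvert x_i - x_i^{\perp}\rvert$, with strict inequality unless the phases already agreed. This contradicts optimality, so the phases must match.

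Once phases are aligned, the problem reduces to the real nonnegative magnitudes, and the magnitude bound $\lvert x_i^{\perp}\rvert \leq \lvert x_i\rvert$ follows by a second, analogous truncation argument: if some $\lvert x_i^{\perp}\rvert > \lvert x_i\rvert$, replacing $x_i^{\perp}$ by $x_i$ itself strictly decreases the $i$-th term $\lvert x_i - x_i^{\perp}\rvert^2$ down to zero while also decreasing $\lvert x_i^{\perp}\rvert^p$, so feasibility is preserved and the objective strictly drops --- again a contradiction. I expect the only subtlety, rather than a genuine obstacle, to be bookkeeping the degenerate cases ($x_i = 0$, or $x_i^{\perp} = 0$, where $\Arg$ is undefined and the phase claim should be read as vacuous); these are handled by noting that the exchange vectors remain feasible and the strict-decrease inequalities still hold or become trivial. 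The argument is self-contained and does not even require the RIP machinery, relying only on the separable structure of the $\ell_p$ constraint and the convexity of $\lvert x_i - \cdot\rvert^2$ in each coordinate.
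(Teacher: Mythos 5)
Your proposal is correct and follows essentially the same route as the paper: a contradiction/exchange argument that replaces the offending coordinate with a feasible competitor strictly closer to $\vc x$, exploiting that the $\ell_{p}$ constraint depends only on magnitudes. The only cosmetic difference is that you split the replacement into two steps (phase rotation, then magnitude truncation), whereas the paper does both at once by setting $x'_{i}=\min\left\{ \left|x_{i}\right|,\left|x_{i}^{\perp}\right|\right\} \exp\left(\imath\,\mathrm{Arg}\left(x_{i}\right)\right)$.
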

\begin{proof}
Proof by contradiction. Suppose that for some $i$ we have $\mathrm{Arg}\left(x_{i}\right)\neq\mathrm{Arg}\left(x_{i}^{\perp}\right)$
or $\left|x_{i}^{\perp}\right|>\left|x_{i}\right|$. Consider the
vector $\vc x'$ for which $x'_{j}=x_{j}^{\perp}$ for $j\neq i$
and $x'_{i}=\min\left\{ \left|x_{i}\right|,\left|x_{i}^{\perp}\right|\right\} \exp\left(\mbox{\ensuremath{\imath}}\mathrm{Arg}\left(x_{i}\right)\right)$
(the character $\imath$ denotes the imaginary unit $\sqrt{-1}$).
We have $\norm{\vc x'}_{p}\leq\norm{\vc x^{\perp}}_{p}$ which implies
that $\vc x'\in\st B$. Since $\left|x_{i}-x'_{i}\right|<\left|x_{i}-x_{i}^{\perp}\right|$
we have $\norm{\vc x'-\vc x}_{2}<\norm{\vc x^{\perp}-\vc x}_{2}$
which contradicts the choice of $\vc x^{\perp}$ as a projection.\end{proof}
\begin{assumption*}
Lemma \ref{lem:PhaseMatch} asserts that the projection $\vc x^{\perp}$
has the same phase components as $\vc x$. Therefore, without loss
of generality and for simplicity in the following lemmas we assume
$\vc x$ has real-valued non-negative entries.\end{assumption*}
\begin{lem}
\label{lem:Sorted}For any $\vc x$ in the positive orthant there
is a projection $\vc x^{\perp}$ of $\vc x$ onto the set $\st B$
such that for $i,j\in\left\{ 1,2,\ldots,n\right\} $ we have $x_{i}^{\perp}\leq x_{j}^{\perp}$
iff $x_{i}\leq x_{j}$.\end{lem}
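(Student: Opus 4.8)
The plan is to start from an arbitrary projection and permute its entries until they are sorted in the same order as the entries of $\vc x$, exploiting the single structural feature of the feasible set that makes this possible: $\st B=\st F_{p}\left(c\right)$ is invariant under permutations of the coordinates, so rearranging the entries of a feasible point keeps it feasible. First I would secure a starting point: by the Assumption following Lemma~\ref{lem:PhaseMatch} I may take $\vc x$ to have real nonnegative entries, and since for $p\in\left(0,1\right]$ the set $\st B$ is compact and $\vc u\mapsto\norm{\vc x-\vc u}_{2}^{2}$ is continuous, at least one projection $\vc y$ of $\vc x$ onto $\st B$ exists; by Lemma~\ref{lem:PhaseMatch} its entries are nonnegative as well. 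The goal is then to manufacture, out of $\vc y$, a projection whose coordinate values are ordered consistently with those of $\vc x$.

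The heart of the argument is a pairwise exchange estimate. Fixing indices $i,j$, let $\vc x'$ be the vector obtained from $\vc y$ by interchanging the two entries $y_{i}$ and $y_{j}$. Permutation invariance of $\norm{\cdot}_{p}$ gives $\vc x'\in\st B$, so $\vc x'$ is feasible, and a direct expansion of the two sums (only coordinates $i$ and $j$ differ) yields
\begin{align*}
\norm{\vc x-\vc y}_{2}^{2}-\norm{\vc x-\vc x'}_{2}^{2} & =2\left(y_{j}-y_{i}\right)\left(x_{i}-x_{j}\right).
\end{align*}
Consequently, whenever $\vc y$ is in the ``wrong'' order relative to $\vc x$, that is $x_{i}>x_{j}$ while $y_{i}<y_{j}$, the right-hand side is strictly positive, so $\vc x'$ is strictly closer to $\vc x$, contradicting optimality of $\vc y$.

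To pass from ``no strict inversion'' to a genuinely order-preserving projection I would invoke the rearrangement inequality rather than iterate swaps by hand: among all assignments of the multiset of entries of $\vc y$ to the $n$ coordinates, the inner product $\sum_{i}x_{i}y_{\pi\left(i\right)}$ is maximized — and hence $\norm{\vc x-\cdot}_{2}^{2}=\norm{\vc x}_{2}^{2}+\norm{\vc y}_{2}^{2}-2\sum_{i}x_{i}y_{\pi\left(i\right)}$ is minimized — by the comonotone assignment. Calling the resulting vector $\vc x^{\perp}$, it has the same entries as $\vc y$ up to a permutation, so it is feasible and attains the same minimal distance and is therefore itself a projection; by construction its entries are sorted in the same order as those of $\vc x$, which delivers the asserted equivalence $x_{i}^{\perp}\le x_{j}^{\perp}\iff x_{i}\le x_{j}$ for this comonotone representative.

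The step I expect to demand the most care is the treatment of ties. When $x_{i}=x_{j}$ the exchange identity above is an \emph{equality}, so the pairwise argument alone neither forces nor forbids $x_{i}^{\perp}=x_{j}^{\perp}$; the equivalence must instead be read off the global sort, i.e.\ by explicitly ordering the multiset of magnitudes of $\vc y$ and reassigning them in the order of $\vc x$ under a fixed tie-breaking rule, with the rearrangement inequality guaranteeing this reassignment never increases the distance. The genuine point of subtlety is that, unlike the convex case $p=1$, for $p<1$ the projection need not be unique and distinct projections need not be order-preserving; what rescues the statement is precisely the permutation-invariance of $\st B$, which always makes the comonotone representative available among the minimizers.
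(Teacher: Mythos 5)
Your proof is correct and follows essentially the same route as the paper's: both rest on the permutation-invariance of $\st B=\st F_{p}\left(c\right)$ together with the pairwise-swap comparison $\norm{\vc x-\vc y}_{2}^{2}-\norm{\vc x-\vc x'}_{2}^{2}=2\left(y_{j}-y_{i}\right)\left(x_{i}-x_{j}\right)$, the paper applying it directly to a chosen projection and disposing of ties by a without-loss-of-generality relabeling, while you package the same swap idea globally via the rearrangement inequality. The tie-breaking subtlety you flag is genuine, but it is inherent to the ``iff'' phrasing of the statement rather than a defect of your argument, and the paper's own proof treats it no more carefully.
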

\begin{proof}
Note that the set $\st B$ is closed under any permutation of coordinates.
In particular, by interchanging the $i$-th and $j$-th entries of
$\vc x^{\perp}$ we obtain another vector $\vc x'$ in $\st B$. Since
$\vc x^{\perp}$ is a projection of $\vc x$ onto $\st B$ we must
have $\left\Vert \vc x-\vc x^{\perp}\right\Vert _{2}^{2}\leq\left\Vert \vc x-\vc x'\right\Vert _{2}^{2}$.
Therefore, we have $\left(x_{i}-x_{i}^{\perp}\right)^{2}+\left(x_{j}-x_{j}^{\perp}\right)^{2}\leq\left(x_{i}-x_{j}^{\perp}\right)^{2}+\left(x_{j}-x_{i}^{\perp}\right)^{2}$
and from that $0\leq\left(x_{i}-x_{j}\right)\left(x_{i}^{\perp}-x_{j}^{\perp}\right).$
For $x_{i}\neq x_{j}$ the result follows immediately, and for $x_{i}=x_{j}$
without loss of generality we can assume $x_{i}^{\perp}\leq x_{j}^{\perp}$. \end{proof}
\begin{lem}
\label{lem:LagrangeOptimal}Let $\st S^{\perp}$ be the support set
of $\vc x^{\perp}$. Then there exists a $\lambda\geq0$ such that
\begin{align*}
x_{i}^{\perp(1-p)}\left(x_{i}-x_{i}^{\perp}\right) & =p\lambda
\end{align*}
 for all $i\in\st S^{\perp}$.\end{lem}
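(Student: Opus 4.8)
The plan is to read this identity as the first-order (KKT-type) stationarity condition for the constrained projection \eqref{eq:LpProjection}. Two features obstruct a direct appeal to standard Lagrange-multiplier theory: the feasible set $\st B$ is non-convex for $p<1$, and the map $t\mapsto t^{p}$ has unbounded derivative as $t\to 0^{+}$. The second difficulty is harmless on the support: on $\st S^{\perp}$ every coordinate satisfies $x_{i}^{\perp}>0$, where $t\mapsto t^{p}$ is smooth. The first I would sidestep entirely by never invoking a global constraint qualification; instead I would derive the condition from a one-parameter \emph{exchange} perturbation that is manifestly feasible. By the assumption stated after Lemma \ref{lem:PhaseMatch} we may take $\vc x$ to have non-negative real entries, and Lemma \ref{lem:PhaseMatch} then guarantees $0\le x_{i}^{\perp}\le x_{i}$ for every $i$.

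First I would dispose of the degenerate cases. If the constraint is inactive, i.e.\ $\norm{\vc x^{\perp}}_{p}^{p}<c$, then $\vc x^{\perp}$ lies in the interior of $\st B$; since it minimizes the strictly convex distance $\tfrac12\norm{\vc x-\vc u}_{2}^{2}$ over a neighborhood contained in $\st B$, it must equal the unconstrained minimizer, forcing $\vc x^{\perp}=\vc x$, and the claimed identity holds with $\lambda=0$. Likewise, if $\left|\st S^{\perp}\right|=1$ I would simply set $\lambda=\tfrac{1}{p}x_{i}^{\perp(1-p)}(x_{i}-x_{i}^{\perp})$ for the unique $i\in\st S^{\perp}$ and note $\lambda\ge 0$ since $x_{i}\ge x_{i}^{\perp}\ge 0$. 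The substantive case is therefore an active constraint $\norm{\vc x^{\perp}}_{p}^{p}=c$ with $\left|\st S^{\perp}\right|\ge 2$.

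For the main case I would fix two indices $i,j\in\st S^{\perp}$ and vary only those two coordinates along a curve that preserves $u_{i}^{p}+u_{j}^{p}=(x_{i}^{\perp})^{p}+(x_{j}^{\perp})^{p}$, namely $u_{i}(t)$ with $u_{i}(0)=x_{i}^{\perp}$ and $u_{j}(t)=\big((x_{i}^{\perp})^{p}+(x_{j}^{\perp})^{p}-u_{i}(t)^{p}\big)^{1/p}$. This keeps $\sum_{k}u_{k}^{p}=c$, so the curve stays in $\st B$, and because $x_{i}^{\perp},x_{j}^{\perp}>0$ the parameter value $t=0$ is interior to its admissible range and the restricted objective $g(t)=\tfrac12(x_{i}-u_{i}(t))^{2}+\tfrac12(x_{j}-u_{j}(t))^{2}+\text{const}$ is differentiable there. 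Minimality of $\vc x^{\perp}$ gives the necessary condition $g'(0)=0$. Differentiating the constraint yields $\dot u_{j}(0)=-(x_{j}^{\perp})^{1-p}(x_{i}^{\perp})^{p-1}\dot u_{i}(0)$, and substituting into $g'(0)=-(x_{i}-u_{i})\dot u_{i}-(x_{j}-u_{j})\dot u_{j}$ at $t=0$, then multiplying by $(x_{i}^{\perp})^{1-p}$, gives
\[
x_{i}^{\perp(1-p)}\left(x_{i}-x_{i}^{\perp}\right)=x_{j}^{\perp(1-p)}\left(x_{j}-x_{j}^{\perp}\right).
\]

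Hence $x_{i}^{\perp(1-p)}(x_{i}-x_{i}^{\perp})$ takes a common value over all $i\in\st S^{\perp}$, which I would name $p\lambda$. Its sign is settled by Lemma \ref{lem:PhaseMatch}: since $x_{i}\ge x_{i}^{\perp}\ge 0$ on the support, every factor is non-negative, so $\lambda\ge 0$, completing the argument. I expect the only genuine obstacle to be the rigorous justification of the vanishing first-order condition under the non-convex constraint; the explicit feasibility-preserving exchange curve is precisely the device that reduces this to an elementary one-variable calculus fact and removes any need for a constraint qualification or for differentiating $t\mapsto t^{p}$ at the origin.
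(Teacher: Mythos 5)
Your proof is correct, but it reaches the conclusion by a genuinely different route than the paper. The paper restricts the minimization to the support $\st S^{\perp}$, observes that the gradient of the constraint function is well defined there (all entries positive), and invokes the Lagrange stationarity condition $\frac{\partial L}{\partial\vc v}=0$ directly, asserting the existence of an appropriate $\lambda\geq0$. You instead avoid any appeal to multiplier theory on the non-convex set $\st F_{p}(c)$: for each pair $i,j\in\st S^{\perp}$ you build an explicit feasible curve that holds $u_{i}^{p}+u_{j}^{p}$ (hence the whole constraint sum) fixed, differentiate the restricted objective at the interior point $t=0$, and conclude that $x_{i}^{\perp(1-p)}(x_{i}-x_{i}^{\perp})$ is the same for all $i$ in the support; the sign $\lambda\geq0$ then falls out of Lemma \ref{lem:PhaseMatch} rather than from the KKT sign convention. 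What your approach buys is rigor at the point where the paper is most terse: the exchange curve is manifestly feasible, so no constraint qualification needs to be checked, and you also dispose explicitly of the inactive-constraint and singleton-support cases that the paper leaves implicit. What the paper's approach buys is brevity, and in fairness the qualification it needs does hold here (a single active smooth constraint with nonvanishing gradient on the positive entries). One shared caveat: both arguments implicitly require $p>0$ (your curve $u_{j}(t)=(\cdot)^{1/p}$ and your division by $p$ in defining $\lambda$, the paper's derivative $p\lambda x_{i}^{\perp(p-1)}$ all degenerate at $p=0$), but at $p=0$ the identity reduces to $x_{i}^{\perp}(x_{i}-x_{i}^{\perp})=0$, which holds trivially for the hard-thresholding projection, so this is cosmetic.
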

\begin{proof}
The fact that $\vc x^{\perp}$ is a solution to the minimization expressed
in \eqref{eq:LpProjection} implies that that $\vc x^{\perp}|_{\st S^{\perp}}$
must be a solution to 
\begin{align*}
\arg\min_{\vc v} & \ \frac{1}{2}\left\Vert \vc x|_{\st S^{\perp}}-\vc v\right\Vert _{2}^{2}\quad\text{s.t. }\left\Vert \vc v\right\Vert _{p}^{p}\leq c.
\end{align*}
The normal to the feasible set (i.e., the gradient of the constraint
function) is uniquely defined at $\vc x^{\perp}|_{\st S^{\perp}}$
since all of its entries are positive by assumption. Consequently,
the Lagrangian 
\begin{align*}
L\left(\vc v,\lambda\right) & =\frac{1}{2}\left\Vert \vc x|_{\st S^{\perp}}-\vc v\right\Vert _{2}^{2}\!+\lambda\left(\left\Vert \vc v\right\Vert _{p}^{p}-c\right)
\end{align*}
 has a well-defined partial derivative $\frac{\partial L}{\partial\vc v}$
at $\vc x^{\perp}|_{\st S^{\perp}}$ which must be equal to zero for
an appropriate $\lambda\geq0$. Hence, 
\begin{align*}
\forall i & \in\st S^{\perp}\ x_{i}^{\perp}-x_{i}+p\lambda x_{i}^{\perp\left(p-1\right)}=0
\end{align*}
 which is equivalent to the desired result.\end{proof}
\begin{lem}
\label{lem:ShrinkageCurves}Let $\lambda\geq0$ and $p\in\left[0,1\right]$
be fixed numbers and set $T_{0}=\left(2-p\right)\left(p\left(1-p\right)^{p-1}\lambda\right)^{\frac{1}{2-p}}$.
Denote the function $t^{1-p}\left(T-t\right)$ by $h_{p}\left(t\right)$.
The following statements hold regarding the roots of $h_{p}\left(t\right)=p\lambda$:
\begin{enumerate}[label=\textnormal{(\roman*)}]
\item  \label{enu:ShrinkagrCurvesPart1}For $p=1$ and $T\geq T_{0}$ the
equation $h_{1}\left(t\right)=\lambda$ has a unique solution at $t=T-\lambda\in\left[0,T\right]$
which is an increasing function of $T$.
\item \label{enu:ShrinkagrCurvesPart2}For $p\in\left[0,1\right)$ and $T\geq T_{0}$
the equation $h_{p}\left(t\right)=p\lambda$ has two roots $t_{-}$
and $t_{+}$ satisfying $t_{-}\in\left(0,\frac{1-p}{2-p}T\right]$
and $t_{+}\in\left[\frac{1-p}{2-p}T,+\infty\right)$. As a function
of $T$, $t_{-}$ and $t_{+}$ are decreasing and increasing, respectively
and they coincide at $T=T_{0}$.
\end{enumerate}
\end{lem}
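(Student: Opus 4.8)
The plan is to reduce everything to an elementary single–variable analysis of the map $t \mapsto h_p(t) = t^{1-p}(T-t)$ on $[0,\infty)$, treating $T$ as a parameter. The case $p=1$ in part \ref{enu:ShrinkagrCurvesPart1} is immediate: there $h_1(t)=T-t$, so $h_1(t)=\lambda$ has the single solution $t=T-\lambda$, which lies in $[0,T]$ exactly when $0\le\lambda\le T$, i.e. when $T\ge T_0=\lambda$ (using the convention $0^0=1$ in the definition of $T_0$), and is visibly increasing in $T$ with slope one. Hence I would dispose of this case in one line and devote the bulk of the argument to $p\in[0,1)$.

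For part \ref{enu:ShrinkagrCurvesPart2} I would first differentiate: for $t>0$,
\begin{align*}
h_p'(t) & = t^{-p}\bigl[(1-p)T-(2-p)t\bigr],
\end{align*}
so that $h_p$ has a unique interior critical point at $t^{\ast}=\tfrac{1-p}{2-p}T$, is strictly increasing on $(0,t^{\ast})$ and strictly decreasing on $(t^{\ast},\infty)$. Since $h_p(0)=h_p(T)=0$ and $h_p(t)<0$ for $t>T$ (while $p\lambda>0$ in the relevant regime), every root of $h_p(t)=p\lambda$ must lie in $(0,T)$, and the graph is a single ``hump'' of height
\begin{align*}
h_p(t^{\ast}) & = \frac{(1-p)^{1-p}}{(2-p)^{2-p}}\,T^{2-p}.
\end{align*}
The central algebraic step is to verify that this peak value equals $p\lambda$ precisely when $T=T_0$; solving $h_p(t^{\ast})=p\lambda$ for $T$ returns $T=(2-p)\bigl(p(1-p)^{p-1}\lambda\bigr)^{1/(2-p)}$, which is exactly $T_0$. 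Consequently $h_p(t^{\ast})\ge p\lambda$ iff $T\ge T_0$.

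Given $T\ge T_0$, I would invoke the intermediate value theorem separately on the two monotone branches: on $(0,t^{\ast}]$ the function climbs from $0$ to $h_p(t^{\ast})\ge p\lambda$, producing a \emph{unique} root $t_-\in(0,t^{\ast}]$, and on $[t^{\ast},T)$ it descends from $h_p(t^{\ast})$ back to $0$, producing a \emph{unique} root $t_+\in[t^{\ast},T)$; strict monotonicity on each branch rules out any others and makes $t_\pm$ well defined. Recalling $t^{\ast}=\tfrac{1-p}{2-p}T$ yields the claimed membership $t_-\in(0,\tfrac{1-p}{2-p}T]$ and $t_+\in[\tfrac{1-p}{2-p}T,+\infty)$, and at $T=T_0$ the peak just touches the level $p\lambda$, forcing $t_-=t_+=t^{\ast}$, i.e. the two roots coincide.

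Finally, for the monotonicity in $T$ I would differentiate the defining relation $t^{1-p}(T-t)=p\lambda$ implicitly. Writing $F(t,T)=t^{1-p}(T-t)-p\lambda$, we have $\partial_T F=t^{1-p}>0$ and $\partial_t F=h_p'(t)=t^{-p}\bigl[(1-p)T-(2-p)t\bigr]$, so the implicit function theorem gives
\begin{align*}
\frac{dt}{dT} & = -\frac{t}{(1-p)T-(2-p)t}.
\end{align*}
On the lower branch $t_-<t^{\ast}$ the denominator is positive, so $t_-$ is \emph{decreasing} in $T$; on the upper branch $t_+>t^{\ast}$ it is negative, so $t_+$ is \emph{increasing}. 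The main obstacle I anticipate is bookkeeping rather than ideas: the implicit function theorem degenerates exactly at the coincidence point $t=t^{\ast}$ (where $\partial_t F=0$), so I would apply the implicit differentiation only on the open branches for $T>T_0$ and treat $T=T_0$ by the direct peak-touching argument above. The other point requiring care is the boundary value $p=0$, where $p\lambda=0$ collapses the lower root to $t_-=0$; I would note that the clean two-root picture presumes $\lambda>0$ (the active-constraint regime in which the lemma is used) and that the half-open intervals are stated with this in mind.
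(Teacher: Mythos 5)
Your proposal is correct and follows essentially the same route as the paper's own proof: locate the peak at $t_{\max}=\frac{1-p}{2-p}T$, observe that $T\geq T_{0}$ is equivalent to $h_{p}(t_{\max})\geq p\lambda$, apply the intermediate value theorem on the two monotone branches, and obtain the monotonicity of $t_{\mp}$ in $T$ by implicitly differentiating the defining relation. Your explicit computation of the peak value and your remark about the degenerate boundary case $p=0$ (where the paper also treats $t_{-}=0$ somewhat loosely) are minor refinements rather than a different argument.
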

\begin{proof}
Fig. \ref{fig:ShrinkageCurves} illustrates $h_{p}\left(t\right)$
for different values of $p\in\left[0,1\right]$. To verify part \ref{enu:ShrinkagrCurvesPart1}
observe that we have $T_{0}=\lambda$ thereby $T\geq\lambda$. The
claim is then obvious since $h_{1}\left(t\right)-\lambda=T-t-\lambda$
is zero at $t=T-\lambda$. Part \ref{enu:ShrinkagrCurvesPart2} is
more intricate and we divide it into two cases: $p=0$ and $p\neq0$.
At $p=0$ we have $T_{0}=0$ and $h_{0}\left(t\right)=t\left(T-t\right)$
has two zeros at $t_{-}=0$ and $t_{+}=T$ that obviously satisfy
the claim. So we can now focus on the case $p\in\left(0,1\right)$.
It is straightforward to verify that $t_{\max}=\frac{1-p}{2-p}T$
is the location at which $h_{p}\left(t\right)$ peaks. Straightforward
algebraic manipulations also show that $T>T_{0}$ is equivalent to
$p\lambda<h_{p}\left(t_{\max}\right)$. Furthermore, inspecting the
sign of $h'_{p}\left(t\right)$ shows that $h_{p}\left(t\right)$
is strictly increasing over $\left[0,t_{\max}\right]$ while it is
strictly decreasing over $\left[t_{\max},T\right]$. Then, using the
fact that $h_{p}\left(0\right)=h_{p}\left(T\right)=0\leq p\lambda<h_{p}\left(t_{\max}\right)$,
it follows from the \emph{intermediate value theorem} that $h_{p}\left(t\right)=p\lambda$
has exactly two roots, $t_{-}$ and $t_{+}$, that straddle $t_{\max}$
as claimed. Furthermore, taking the derivative of $t_{-}^{1-p}\left(T-t_{-}\right)=p\lambda$
with respect to $T$ yields 
\begin{align*}
\left(1-p\right)t_{-}'t_{-}^{-p}\left(T-t_{-}\right)+t_{-}^{1-p}\left(1-t_{-}'\right) & =0.
\end{align*}
 Hence, 
\begin{align*}
\left(\left(1-p\right)\left(T-t_{-}\right)-t_{-}\right)t_{-}' & =-t_{-}
\end{align*}
which because $t_{-}\leq t_{\max}=\frac{1-p}{2-p}T$ implies that
$t_{-}'<0$. Thus $t{}_{-}$ is a decreasing function of $T$. Similarly
we can show that $t_{+}$ is an increasing function of $T$ using
the fact that $t_{+}\geq t_{\max}$. Finally, as $T$ decreases to
$T_{0}$ the peak value $h_{p}\left(t_{\max}\right)$ decreases to
$p\lambda$ which implies that $t_{-}$ and $t_{+}$ both tend to
the same value of $\frac{1-p}{2-p}T_{0}$.
\end{proof}
\noindent \begin{center}
\begin{figure}
\noindent \centering\includegraphics[width=0.65\textwidth]{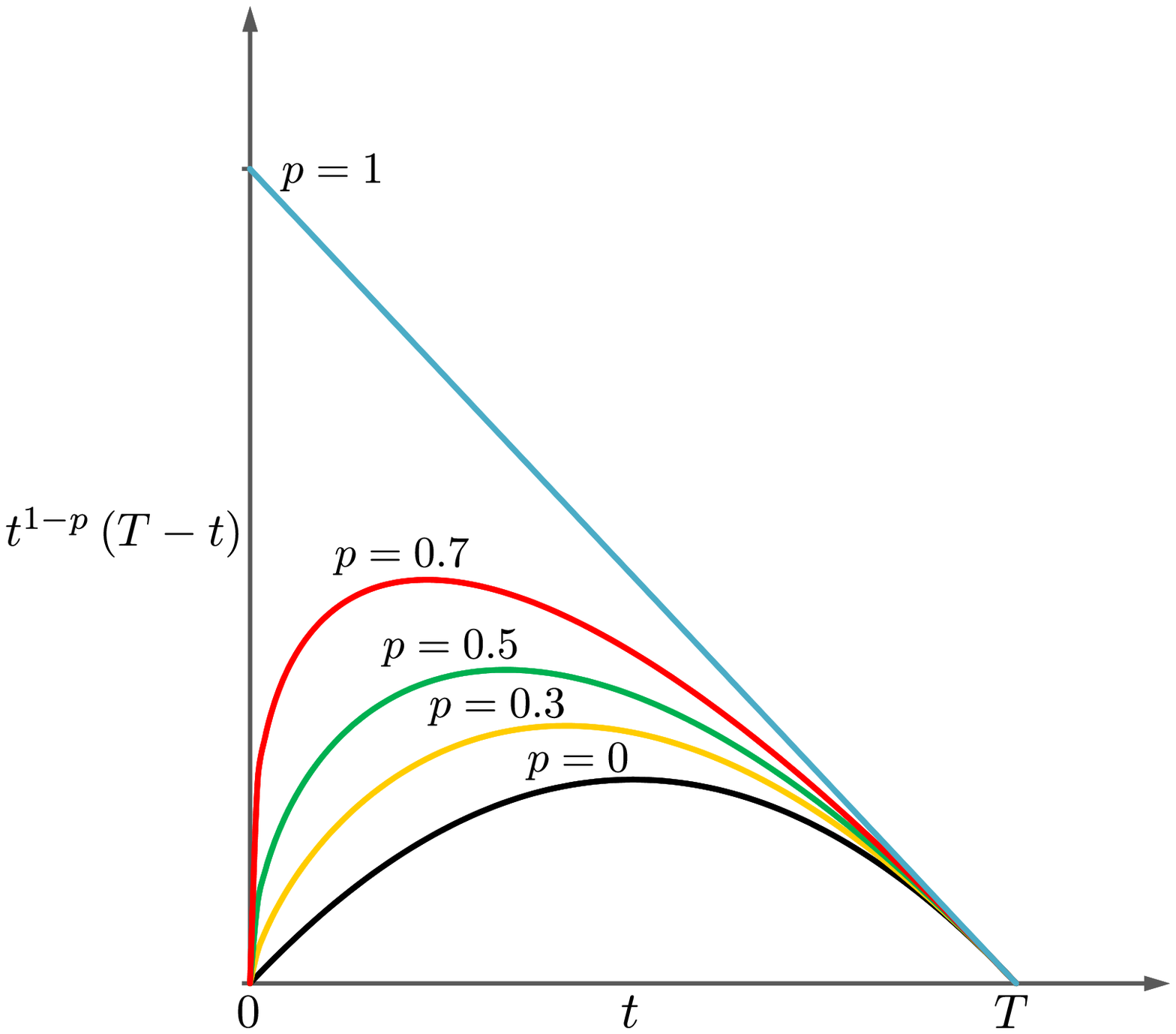}

\caption{\label{fig:ShrinkageCurves}The function $t^{1-p}\left(T-t\right)$
for different values of $p$}
\end{figure}

\par\end{center}
\begin{lem}
\label{lem:EqualMinEntries}Suppose that $x_{i}=x_{j}>0$ for some
$i\neq j$. If $x_{i}^{\perp}=x_{j}^{\perp}>0$ then $x_{i}^{\perp}\geq\frac{1-p}{2-p}x_{i}$
.\end{lem}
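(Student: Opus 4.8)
The plan is to exploit the optimality of $\vc x^{\perp}$ by restricting attention to the two coordinates $i$ and $j$ and running a second-order (local minimality) argument on the resulting two-variable problem. Write $T=x_i=x_j$ and $t=x_i^{\perp}=x_j^{\perp}>0$, so that both indices lie in $\st S^{\perp}$. First I would dispose of the trivial case $t=T$, where the bound holds automatically because $\frac{1-p}{2-p}\le 1$; equivalently, by Lemma \ref{lem:LagrangeOptimal} this is the case $\lambda=0$. So assume $t<T$, which (since $t\le T$ always, by Lemma \ref{lem:PhaseMatch}) forces the $\ell_p$ constraint to be active at $\vc x^{\perp}$.

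Next, since $\vc x^{\perp}$ is a projection, freezing every coordinate except $i$ and $j$ at its optimal value shows that $(t,t)$ must minimize $g(u,v)=\frac12(T-u)^2+\frac12(T-v)^2$ over $u,v\ge 0$ with $u^p+v^p\le 2t^p$, where $2t^p$ is the budget left for coordinates $i,j$. Because $t>0$, the minimizer is interior to the positive orthant, and in the case $t<T$ it saturates the constraint (an unsaturated minimizer would be the unconstrained point $(T,T)$, which needs $2T^p\le 2t^p$, i.e. $t=T$). I would therefore parametrize the active curve $u^p+v^p=2t^p$ by $v=v(u)$ with $v(t)=t$, and study $G(u)=g\bigl(u,v(u)\bigr)$ as a function of a single variable near $u=t$. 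Implicit differentiation gives $v'(u)=-u^{p-1}v^{1-p}$, hence $v'(t)=-1$, and symmetry yields $G'(t)=0$; local minimality then forces the second-order necessary condition $G''(t)\ge 0$.

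The crux is the computation of $G''(t)$. A direct calculation, using $v'(u)=-u^{p-1}v^{1-p}$ and evaluating at $u=v=t$, gives $G''(t)=2\bigl(1-(1-p)(T-t)/t\bigr)$; imposing $G''(t)\ge 0$ rearranges exactly to $t(2-p)\ge(1-p)T$, that is $x_i^{\perp}=t\ge\frac{1-p}{2-p}x_i$, as claimed. I expect the main obstacle to be the bookkeeping in this second-derivative computation together with the clean justification that $G''(t)\ge 0$ is the right necessary condition along the one-dimensional constraint curve. As a sanity check and an alternative route, one can connect this to Lemma \ref{lem:ShrinkageCurves}: with $\lambda$ from Lemma \ref{lem:LagrangeOptimal}, the value $t$ is a root of $h_p(t)=p\lambda$, and since a short calculation shows $G''(t)=-2t^{p-1}h_p'(t)$, the condition $G''(t)\ge 0$ is equivalent to $h_p'(t)\le 0$. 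This places $t$ on the decreasing branch of $h_p$, so $t$ must be the larger root $t_{+}\in[\frac{1-p}{2-p}T,+\infty)$ of Lemma \ref{lem:ShrinkageCurves}, which again delivers the desired bound.
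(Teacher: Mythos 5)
Your proposal is correct, and the reduction is the same as the paper's: both arguments freeze all coordinates except $i$ and $j$ and exploit optimality of $(x_i^{\perp},x_j^{\perp})$ for the resulting two-variable problem on the curve $u^p+v^p=\mathrm{const}$. Where you diverge is in how local optimality is cashed in. The paper argues by contradiction: assuming $w=x_i^{\perp}/x_i<\frac{1-p}{2-p}$, it computes the \emph{first} derivative $\psi'(a)$ of the objective along the constraint curve, rewrites it via the mean value theorem as $(2-p)(b-a)\nu^{-p}\bigl(\frac{1-p}{2-p}-\nu\bigr)$, and shows on a carefully chosen interval $[r_0,r_1]$ that the sign of $\psi'$ flips from positive to negative at $a=w$, making the point a local maximum. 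You instead apply the second-order necessary condition $G''(t)\ge 0$ directly; your computation $v'(t)=-1$, $v''(t)=2(1-p)/t$, hence $G''(t)=2\bigl(1-(1-p)(T-t)/t\bigr)$, checks out, and $G''(t)\ge 0$ rearranges exactly to $t\ge\frac{1-p}{2-p}T$. Your identity $G''(t)=-2t^{p-1}h_p'(t)$ also verifies, tying the result cleanly to Lemma \ref{lem:ShrinkageCurves}: the second-order condition says $t$ sits on the non-increasing branch of $h_p$, i.e.\ $t=t_+$. Your route is a direct proof that avoids the mean value theorem and the interval bookkeeping around $[r_0,r_1]$, at the cost of a second-derivative computation; the paper's route needs only first derivatives but is a contradiction argument with more delicate sign tracking. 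Two small points to tidy up: the implicit differentiation of $u^p+v^p=2t^p$ presumes $p\in(0,1)$ (and differentiability of $t\mapsto t^p$ at positive arguments), so the endpoint cases $p=0$ and $p=1$ should be dispatched separately as the paper does --- both are immediate, since $\frac{1-p}{2-p}=0$ at $p=1$ and hard thresholding gives $x_i^{\perp}=x_i$ at $p=0$; and your remark about the minimizer ``saturating the constraint'' is really the observation that if the remaining $p$-budget for coordinates $i,j$ exceeded $2t^p$ then $(t,t)$ would be an interior local minimum of $g$, forcing $t=T$ --- worth stating in that form, though restriction to the curve is in any case legitimate because a local minimizer over a set remains one over any subset containing the point.
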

\begin{proof}
For $p\in\left\{ 0,1\right\} $ the claim is obvious since at $p=0$
we have $x_{i}^{\perp}=x_{i}>\frac{1}{2}x_{i}$ and at $p=1$ we have
$\frac{1-p}{2-p}x_{i}=0$. Therefore, without loss of generality we
assume $p\in\left(0,1\right)$. The proof is by contradiction. Suppose
that $w=\frac{x_{i}^{\perp}}{x_{i}}=\frac{x_{j}^{\perp}}{x_{j}}<\frac{1-p}{2-p}$.
Since $\vc x^{\perp}$ is a projection it follows that $a=b=w$ must
be the solution to 
\begin{align*}
\arg\min_{a,b}\ \psi=\frac{1}{2}\left[\left(1-a\right)^{2}+\left(1-b\right)^{2}\right]\ \text{ s.t. }a^{p}+b^{p} & =2w^{p},\: a>0,\text{ and }b>0,
\end{align*}
otherwise the vector $\vc x'$ that is identical to $\vc x^{\perp}$
except for $x'_{i}=ax_{i}\neq x_{i}^{\perp}$ and $x'_{j}=bx_{j}\neq x_{i}^{\perp}$
is also a feasible point (i.e., $\vc x'\in\st B$) that satisfies
\begin{align*}
\norm{\vc x'-\vc x}_{2}^{2}-\norm{\vc x^{\perp}-\vc x}_{2}^{2} & =\left(1-a\right)^{2}x_{i}^{2}+\left(1-b\right)^{2}x_{j}^{2}-\left(1-w\right)^{2}x_{i}^{2}-\left(1-w\right)^{2}x_{j}^{2}\\
 & =\left(\left(1-a\right)^{2}+\left(1-b\right)^{2}-2\left(1-w\right)^{2}\right)x_{i}^{2}\\
 & <0,
\end{align*}
 which is absurd. If $b$ is considered as a function of $a$ then
$\psi$ can be seen merely as a function of $a$, i.e., $\psi\equiv\psi\left(a\right)$.
Taking the derivative of $\psi$ with respect to $a$ yields 
\begin{align*}
\psi'\left(a\right) & =a-1+b'\left(b-1\right)\\
 & =a-1-\left(\frac{a}{b}\right)^{p-1}\left(b-1\right)\\
 & =\left(b^{1-p}\left(1-b\right)-a^{1-p}\left(1-a\right)\right)a^{p-1}\\
 & =\left(2-p\right)(b-a)\nu^{-p}\left(\frac{1-p}{2-p}-\nu\right),
\end{align*}
where the last equation holds by the \emph{mean value theorem} for
some $\nu\in\left(\min\left\{ a,b\right\} ,\max\left\{ a,b\right\} \right)$.
Since $w<\frac{1-p}{2-p}$ we have $r_{1}:=\min\left\{ 2^{1/p}w,\frac{1-p}{2-p}\right\} >w$
and $r_{0}:=\left(2w^{p}-r_{1}^{p}\right)^{1/p}<w$. With straightforward
algebra one can show that if either $a$ or $b$ belongs to the interval
$\left[r_{0},r_{1}\right]$, then so does the other one. By varying
$a$ in $\left[r_{0},r_{1}\right]$ we always have $\nu<r_{1}\leq\frac{1-p}{2-p}$,
therefore as $a$ increases in this interval the sign of $\psi'$
changes at $a=w$ from positive to negative. Thus, $a=b=w$ is a local
maximum of $\psi$ which is a contradiction.\end{proof}

\end{document}